\documentclass[twoside,11pt]{amsart}
\usepackage{amsmath,latexsym,amssymb,times,mathptm,enumerate,verbatim,amsbsy, graphicx}
\usepackage{hyperref}

\setlength{\oddsidemargin}{0.in}
\setlength{\evensidemargin}{0.in} 
\setlength{\textwidth}{6.6in}

\theoremstyle{plain}
\newtheorem{thm}{Theorem}[section]
\newtheorem{lem}[thm]{Lemma}
\newtheorem{prop}[thm]{Proposition}
\newtheorem{cor}[thm]{Corollary}
\newtheorem{ques}[thm]{Question}

\numberwithin{equation}{thm}

\theoremstyle{definition}

\newtheorem{ex}[thm]{Example}
\newtheorem{ex-notn}[thm]{Example/Notation}

\newtheorem{conj}[thm]{Conjecture}

\newtheorem{rem}[thm]{Remark}

\def\gr{\operatorname{gr}}

\def\rd{\operatorname{r}}

\def\bar#1{\overline{#1}}

\def\Mtwo{{\em Macaulay} 2\expandafter}


\def\dim{{\rm dim} \,}
\def\m{{\mathfrak m}}
\def\n{{\mathfrak n}}
\def\a{{\mathfrak a}}

\def\len{{\lambda}}

\DeclareMathOperator{\eh}{e}
\DeclareMathOperator{\ord}{ord}


\begin{document}

\title{A generalization of an inequality of Lech relating multiplicity and colength}

\author[C. Huneke]{Craig Huneke}
\address{Department of Mathematics, University of Virginia, Charlottesville, VA 22903}
\email{huneke@virginia.edu}

\author[I.Smirnov]{Ilya Smirnov}
\address{Department of Mathematics, University of Michigan, Ann Arbor, MI 48109}
\email{ismirnov@umich.edu}

\author[J. Validashti]{Javid Validashti}
\address{Department of Mathematics, Cleveland State University, Cleveland, OH 44115}
\email{j.validashti@csuohio.edu}

\keywords{multiplicity,  Lech's inequality}
\subjclass[2010]{13H15, 13D40}


\thanks{The first author was partially supported by NSF grant DMS-1460638, and thanks them for their support. }

\date{}

\dedicatory{Dedicated to Professor Gennady Lyubeznik on the occasion of his 60th birthday}

\begin{abstract}
We study conjectured generalizations of a formula of Lech which relates the multiplicity of a finite 
colength ideal in an equicharacteristic local ring to its colength, and prove one of these generalizations
involving the multiplicity of the maximal ideal times the finite colength ideal. We also
propose  a Lech-type formula that relates multiplicity and the number of generators.
We prove the conjecture in dimension three and establish a weaker result in full generality.
\end{abstract}
\maketitle

\section{Introduction}

A  classical inequality due to Lech in 1960 (\cite{Lech}) states that if $(R,\m)$ is a Noetherian local ring with maximal ideal $\m$ and dimension $d$, then for an $\m$-primary ideal $I$ in $R$,
\[
\eh(I) \leq d! \,  \lambda(R/I) \eh(R),
\]
where $\eh(I)$ is the Hilbert-Samuel multiplicity of $I$, the term $\lambda(R/I)$ is the length of $R/I$, and $\eh(R)$ denotes the multiplicity of the local ring $R$, i.e., the Hilbert-Samuel multiplicity of the maximal ideal $\m$. The main result of this paper  (Theorem \ref{main})  is a proof of
an strengthened inequality,
namely that if $R$ has dimension $d\geq 4$, then
\[\eh(\m I) \leq d! \,  \lambda(R/I) \eh(R). \]

Mumford used an asymptotic version of Lech's
Inequality in \cite{Mum} to give a local version of semistability/stability. The first author of this
paper suggested
a generalization of Lech's inequality which was studied by Ananthnarayan and the last author of
this paper in \cite{HV},  and proved in the same paper in several
cases.  The generalized inequality is the following. Here, and for the rest of this paper, set $P(x) = x(x+1) \cdots (x+d-1) $. 

\begin{ques}\cite{HV}\label{conj} Let $(R,\m)$ be a Noetherian local ring of dimension $d$, and let $I$ be an $\m$-primary
ideal. Is
\[
P( \eh(I)^{\frac{1}{d}} )\leq d! \,  \lambda(R/I) \eh(R)?
\]
\end{ques}

Note that $P( x^{\frac{1}{d}})-x$ grows as $x^{\frac{d-1}{d}}$ with respect to $x$, therefore this question is a strong generalization
of Lech's inequality. Moreover, the proposed inequality is sharp in a regular local ring, since  equality holds for  powers of the maximal ideal in this case, as $d! \, \lambda (R/\m^n) = d! \binom{n+d-1}{d} = P(n)= P(\eh(\m^n)^{\frac{1}{d}})$.

In \cite{HV}, several other inequalities were proposed which are successively weaker, and which involve
mixed multiplicities. We are able to prove one of these inequalities (Conjecture \ref{length conj}) in dimension at most three (Corollaries \ref{2.5dim2}~and~\ref{2.5dim3}).  The work in \cite{HV}  also led to a stronger, but natural, inequality than that of Lech in dimension at least 4 (Conjecture \ref{mI}) that is proved in Theorem \ref{main}.

The contents of the paper are organized as follows: in Section~\ref{prelim} we gather background information and some basic lemmas which is used throughout the paper.
In Section~\ref{reg-red} we reduce the proof of our main result to the regular case. In Section~\ref{Dimen 2} we prove a strong inequality for
regular local rings of dimension two, which is then used in Section~\ref{Dimen 3} to prove inequalities for three dimensional
regular local rings. Using the results for three dimensional rings, we then prove our main theorem in Section \ref{mainresult}. In a final section we treat some related conjectures on
the number of generators of integrally closed ideals and their relationship with Lech-type
inequalities. 

\bigskip

\section{Preliminaries}\label{prelim}

\subsection{Mixed multiplicities}

We begin with a summary of information concerning mixed multiplicities, which play an important
role in studying generalizations of Lech's inequality. We refer to \cite{S} for more background information. Let $(R, \m)$ be a Noetherian local ring of dimension $d$ and let $I$ and $J$ be $\m$-primary ideals. The theory of mixed multiplicities originates in \cite{Bhattacharya}, where 
Bhattacharya studied the mixed Hilbert-Samuel function $\len (R/I^sJ^t)$
and showed that it is eventually polynomial. In \cite[\S 2]{T} the {\it mixed multiplicities}  of $I$ and $J$
were defined as normalized coefficients of the highest degree terms of their mixed Hilbert-Samuel polynomial. These numbers are denoted  by $\eh_i(I\mid J)$ for $i=0, \ldots, d$, and they
 satisfy the {\it expansion formula}
\begin{equation}\label{expansion}
\eh(I^sJ^t) = \sum_{i = 0}^d \binom{d}{i} s^i t^{d - i} \eh_i (I \mid J)
\end{equation}
for all non-negative integers $s$ and $t$.
It follows that $\eh_i (I^s \mid J^t) = s^i t^{d - i}\eh_i (I \mid J)$.
The reader should be warned that our notation for mixed multiplicities is slightly different from \cite{RS}, in which the subscript $i$ is used to indicate the degree of {\it the second argument}. We establish a notation that is more in line with that of \cite[\S 2]{T}. To be precise, our $\eh_i (I \mid J)$ is $\eh_{d - i} (I \mid J)$ of \cite{RS}.  In particular, $\eh_0 (I \mid J) = \eh(J)$ and $\eh_d (I \mid J) = \eh(I)$ in our notation. 

\begin{rem}\label{mixedmultASmult}
Risler-Teissier (\cite{T}, also a generalization of Rees, \cite{Rees}) proved that if the residue field is infinite, then
\[
\eh_i (I \mid J) = \eh \left((x_1, \ldots, x_{i}, y_1, \ldots, y_{d - i}) \right),
\]
where $x_1, \ldots, x_{i}$ are general elements in $I$ and $y_1, \ldots, y_{d - i}$ are general elements in $J$. Recall that a {\it general element} in an ideal is a general linear combination of fixed generators of the ideal.
\end{rem}

\begin{rem}\label{mod form}
Let $(R,\m)$ be a Noetherian local ring with infinite residue field.
If $x_1, \ldots, x_i$ are general elements in $\m$, 
we use $R_i$ to denote $R/(x_1, \ldots, x_i)$ and $I_i$ to denote $IR_i$.  
With this notation, $\eh_i (\m \mid I) = \eh(I_i)$ for $i=0, \ldots, d-1$, since $x_1, \ldots, x_i$ are superficial (\cite[p. 306]{T}, \cite[Theorem~1.11]{S}).
\end{rem}

\subsection{Integrally closed ideals}

Recall that an element $x\in R$ is {\it integral} over an ideal $I$ if it is a root of a
polynomial $f(T)\in R[T]$ of the form $f(T) = T^n+a_1T^{n-1}+\cdots + a_n$ with $a_j\in I^j$ for
all $j=1, \ldots, n$. The set of all integral elements over $I$ forms another ideal, $\overline{I}$, the
{\it integral closure} of $I$. If $I = \overline{I}$, then $I$ is said to be integrally closed. For general information concerning integral closures we refer to \cite{SH}. 

We also refer the reader to  the theory of $\m$-full ideals which was developed by Junzo Watanabe in \cite{W}, and which shares many of the same properties and could be important for further progress. In particular, every integrally closed ideal of positive height is $\m$-full (\cite{Goto}). 
In the following, we collect some properties of integrally closed ideals or $\m$-full ideals from  \cite[14.1]{SH} or \cite[Theorem~2]{W}.  Note that $\mu(I)$ denotes the minimal number of generators of an ideal $I$. 

\begin{thm}\label{properties} 
Let $(R,\m)$ be a Noetherian local ring with infinite residue field and $I$ an $\m$-primary integrally closed ideal. Then for a general $x\in \m$,
\begin{enumerate}
\item $I : x=I : \m$,
\item $\mu(I) = \len (R_1/\m_1 I_1) = \mu(I_1) + \len (R_1/I_1)$.
\end{enumerate}
\end{thm}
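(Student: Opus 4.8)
The plan is to reduce both statements to the single fact that an $\m$-primary integrally closed ideal $I$ is \emph{$\m$-full}: for a general element $x\in\m$ (a general linear combination of a fixed set of generators of $\m$, which exists because the residue field is infinite) one has $\m I :_R x = I$. This is Goto's theorem \cite{Goto}, applicable since an $\m$-primary ideal has positive height. I would fix one such $x$ and use it for both parts, noting that the locus of $x\in\m$ for which $\m I:_R x = I$ is large enough that a single general $x$ serves simultaneously.

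For part (1), the inclusion $I:\m \subseteq I:x$ is immediate because $x\in\m$. Conversely, if $a\in I:x$, then for every $m\in\m$ we have $x(ma) = m(xa) \in \m I$, whence $ma \in \m I :_R x = I$ by $\m$-fullness; thus $a \in I:\m$, and so $I:x = I:\m$.

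For part (2), the equality $\lambda(R_1/\m_1 I_1) = \mu(I_1) + \lambda(R_1/I_1)$ is formal and needs no hypothesis on $I$: it is additivity of length along $0\to I_1/\m_1 I_1 \to R_1/\m_1 I_1 \to R_1/I_1 \to 0$ together with $\lambda(I_1/\m_1 I_1) = \mu(I_1)$ by Nakayama. The content is the identity $\mu(I) = \lambda(R_1/\m_1 I_1)$. Since $\m_1 I_1 = (\m I + xR)/xR$, this length equals $\lambda\big(R/(\m I + xR)\big)$, and multiplication by $x$ induces an isomorphism $R/(\m I :_R x) \xrightarrow{\ \sim\ } (xR+\m I)/\m I$, so
\[
\lambda\big(R/(\m I + xR)\big) = \lambda(R/\m I) - \lambda\big(R/(\m I:_R x)\big) = \lambda(R/\m I) - \lambda(R/I) = \lambda(I/\m I) = \mu(I),
\]
the middle step again invoking $\m$-fullness. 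This completes the proof.

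The only genuine input is the $\m$-fullness of $I$; everything else is length bookkeeping. Accordingly, the one point that requires care is making the choice of general element uniform — ensuring that a single $x\in\m$ satisfies $\m I:_R x = I$, so that both conclusions hold for the same $x$ — which is exactly where the hypothesis that the residue field is infinite is used.
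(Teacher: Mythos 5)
Your proposal is correct, and it is essentially the paper's own route: the paper gives no proof of Theorem~\ref{properties}, citing \cite[14.1]{SH} and \cite[Theorem~2]{W}, and those sources prove exactly what you do --- reduce to $\m$-fullness ($\m I : x = I$ for general $x$, via \cite{Goto}) and then do the colon and length bookkeeping you carry out. The only caveat is that Goto's theorem is stated for ideals of positive height, so the (trivial) Artinian case technically needs a separate one-line check, but this is immaterial for how the theorem is used in the paper.
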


\medskip

\subsection{Generalized Lech-type inequalities}

Fix positive constants $s_{d, i}$ such that
\[
P_d(n):=n(n + 1) \cdots (n+d-1) = \sum_{i = 0}^{d-1} s_{d, i} n^{d - i}.
\]
These numbers are known as unsigned Stirling numbers of the first kind. When $d$ is fixed, we shall delete the dimension subscript.
For example, if $d = 4$, then  $s_0=1$, $s_1= 6$, $s_2 = 11$, and $s_3 = 6$.
The following conjecture was made in \cite{HV},

\begin{conj}\label{length conj}
Let $(R, \m)$ be a Noetherian local ring of dimension $d$.
Then  for all $\m$-primary ideals $I$
\[\sum_{ i = 0}^{d - 1} s_i \eh_i (\m \mid I)\leq d!  \len (R/I) \eh(R). \]
\end{conj}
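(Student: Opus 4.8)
\emph{The plan; reduction to the regular case.} Base change along $R\to R[t]_{\m R[t]}$ and passage to $\widehat R$ change neither $d$, nor $\len(R/I)$, nor any $\eh_i(\m\mid I)$, so we may assume $R$ is complete with infinite residue field and fix a coefficient field $k$. Choosing a system of parameters $y_1,\dots,y_d$ generating a minimal reduction of $\m$ makes $R$ module-finite over $A:=k[[y_1,\dots,y_d]]$ of generic rank $e:=\eh(R)$, and one checks (using that $\m_A R$ is a reduction of $\m$, that $(I\cap A)R\subseteq I$, and the associativity formula over the domain $A$) that $\eh_i(\m\mid I)\le e\,\eh_i(\m_A\mid I\cap A)$ for all $i$, while $A/(I\cap A)\hookrightarrow R/I$ gives $\len(A/(I\cap A))\le\len(R/I)$. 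Hence Conjecture~\ref{length conj} for the regular ring $A$ implies it for $R$, and we may assume $R$ is regular; then $\eh(R)=1$, and replacing $I$ by $\bar I$ (which fixes each $\eh_i(\m\mid I)$ and only lowers $\len(R/I)$) we may assume $I$ is integrally closed. It remains to prove, by induction on $d$, that
\[
\sum_{i=0}^{d-1}s_{d,i}\,\eh_i(\m\mid I)\ \le\ d!\,\len(R/I)
\]
for an integrally closed $\m$-primary ideal $I$ in a regular local ring $(R,\m)$ of dimension $d$.

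\emph{Dimensions $\le 2$.} For $d\le 1$ this is Lech's inequality (an equality when $d=1$). For $d=2$, Remark~\ref{mod form} identifies $\eh_1(\m\mid I)$ with the multiplicity of $I$ in the discrete valuation ring $R/(x)$, $x\in\m$ general, i.e.\ with the $\m$-adic order $\ord(I)$; and the Hoskin--Deligne formula for complete ideals in a two-dimensional regular local ring gives $2\len(R/I)-\eh(I)=\sum_v\ord_v(I)$, summed over the base points $v$ of $I$. As $\m$ is one of these base points, $\eh_0(\m\mid I)+s_{2,1}\eh_1(\m\mid I)=\eh(I)+\ord(I)\le 2\len(R/I)$, with equality exactly when $I$ has one base point (e.g.\ $I=\m^n$); this is the sharpness noted after Question~\ref{conj}.

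\emph{Dimension $3$.} Cut by a general $x\in\m$: $R_1=R/(x)$ is a two-dimensional regular local ring, $I_1=IR_1$ is integrally closed, $\eh_1(\m\mid I)=\eh(I_1)$ and $\eh_2(\m\mid I)=\eh_1(\m_1\mid I_1)=\ord(I)$, so the $d=2$ case applied in $R_1$ yields $\eh_1(\m\mid I)+\eh_2(\m\mid I)\le 2\len(R_1/I_1)$. From $0\to(I:x)/I\to R/I\xrightarrow{\,x\,}R/I\to R_1/I_1\to0$ together with Theorem~\ref{properties}(1) one gets $\len(R_1/I_1)=\len((I:\m)/I)=\len(R/I)-\len(R/(I:\m))$. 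Combining these with $\eh_2(\m\mid I)=\ord(I)$ and an auxiliary bound of the shape $\eh(I)\le 6\,\len(R/(I:\m))+\ord(I)$ — established by one more general hyperplane section together with the sharpened two-dimensional estimate of Section~\ref{Dimen 2}, which is the business of Section~\ref{Dimen 3} — and rearranging $\sum_{i=0}^{2}s_{3,i}\eh_i(\m\mid I)=\eh(I)+3\eh_1(\m\mid I)+2\eh_2(\m\mid I)$ yields the required $6\,\len(R/I)$. The point to watch is that the inequality is sharp for $I=\m^n$, so everything has to balance on the nose; this is precisely why Section~\ref{Dimen 2} must prove a strengthening of the plain two-dimensional statement rather than that statement itself.

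\emph{The main obstacle.} The genuine difficulty is $d\ge 4$. After one general hyperplane section one has $\sum_{i=0}^{d-1}s_{d,i}\eh_i(\m\mid I)=\eh(I)+\sum_{j=0}^{d-2}s_{d,j+1}\eh_j(\m_1\mid I_1)$, and the Stirling recursion $s_{d,j+1}=s_{d-1,j+1}+(d-1)s_{d-1,j}$ breaks the tail into $(d-1)$ times the dimension-$(d-1)$ left-hand side for $(R_1,I_1)$ — handled by the inductive hypothesis — plus a \emph{shifted} sum $\sum_{j=0}^{d-3}s_{d-1,j+1}\eh_j(\m_1\mid I_1)$ which is not of the inductive form. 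For $d\le 3$ the shifted sum, together with $\eh(I)$, is small enough to be absorbed into the colength budget left over after the inductive step, using only the exact length relations of Theorem~\ref{properties} and, in dimension two, the exact Hoskin--Deligne accounting. For $d\ge 4$ it is not: the shifted sum already carries $\eh(I_1)=\eh_1(\m\mid I)$ with coefficient $s_{d-1,1}=\binom{d-1}{2}$, and since the target inequality is sharp there is no slack to spend, so the error accumulated through the nested recursion cannot be controlled — the watershed being $d=4$, where the Stirling number $s_{d,1}=\binom d2$ first overtakes the binomial coefficient $\binom d1$. Getting past this — presumably through a higher-dimensional replacement for the Hoskin--Deligne length formula, or a global argument in place of the hyperplane-by-hyperplane induction — is the crux, and is why only the cases $d\le 3$ are settled. (A non-sharp version is elementary: $\eh_i(\m\mid I)\le(d-i)!\,\len(R/I)\eh(R)$ by Lech's inequality in $R/(x_1,\dots,x_i)$, whence $\sum_i s_{d,i}\eh_i(\m\mid I)\le\bigl(\sum_i s_{d,i}(d-i)!\bigr)\len(R/I)\eh(R)$.)
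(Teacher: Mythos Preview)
Your outline tracks the paper's strategy closely: reduce to regular, settle $d\le 2$ via Hoskin--Deligne, handle $d=3$ by a sharpened two-dimensional estimate, and explain why $d\ge 4$ remains open. Two points are worth flagging.

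\emph{The reduction.} Your route (complete, take a coefficient field, descend to $A=k[[y_1,\dots,y_d]]$) is different from the paper's, which first passes to $\gr_\m(R)$ and then does graded Noether normalization. Both are viable, but your inequality $\eh_i(\m\mid I)\le e\cdot\eh_i(\m_A\mid I\cap A)$ is not an immediate consequence of ``the associativity formula over the domain $A$'': you need that $\m_A R$ is a reduction of $\m$, that mixed multiplicities are monotone in the second argument, and that for an $\m_A$-primary ideal $\mathfrak b\subseteq A$ one has $\eh(\mathfrak b R)=e\cdot\eh(\mathfrak b)$, the last of which requires $R$ to have no embedded primes over $A$ or an equidimensionality hypothesis. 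The paper's detour through $\gr_\m(R)$ sidesteps these issues because the graded ring is automatically a finitely generated graded module over a polynomial ring.

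\emph{The dimension-three step.} Here there is a genuine gap. Your auxiliary bound $\eh(I)\le 6\len(R/(I{:}\m))+\ord(I)$ is indeed exactly what is needed, and it is sharp for $I=\m^n$, but ``one more general hyperplane section together with the sharpened two-dimensional estimate'' does not establish it. The paper's proof (Theorem~\ref{dim3thm}) is an induction on $\len(R/I)$, not on dimension: from $\m J\subseteq I$ one gets $\eh(I)\le\eh(\m J)=\eh(J)+3\eh(J_1)+3\eh(J_2)+1$, applies the \emph{strengthened} inductive hypothesis to $J$ (in the form carrying the correction term $-2r(\overline{J_1})$), and combines this with the full force of Theorem~\ref{dim2}, namely $\eh(I_1)\le 2\len(R_1/I_1)-2\ord(I_1)+r(\overline{I_1})$, which requires Lipman's proximity inequality and not merely the Hoskin--Deligne count you used in $d=2$. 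Even then there is a residual edge case, $\overline{I_1}=\overline{J_1}=\m_1^r$, which has to be dispatched separately. Your two-dimensional argument gives only $\eh(I_1)+\ord(I_1)\le 2\len(R_1/I_1)$, and that single lost $\ord(I_1)$ is precisely the slack you cannot afford, for the reason you yourself articulate in the next paragraph.
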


This conjecture is weaker than  Question~\ref{conj} proposed in the introduction. The reason it is weaker is that  we may reduce Conjecture \ref{length conj} to the regular case (Theorem \ref{reduction}), and then we compare the sum $\sum_{ i = 0}^{d - 1} s_i \eh_i (\m \mid I)$ with  $P(\eh(I)^{\frac{1}{d}}) = 
\sum_{i = 0}^{d-1} s_i  \eh(I)^{\frac{d-i}{d}}$ term by term, as  
$\eh_i (\m \mid I ) \leq   \eh(\m)^{\frac{i}{d}}   \eh(I)^{\frac{d-i}{d}}=  \eh(I)^{\frac{d-i}{d}}  $ by inequalities proved by Rees and Sharp \cite[Corollary 2.5]{RS}. Note that Conjecture \ref{length conj} is sharp, since equality holds for $I=\m$.  We  prove
Conjecture \ref{length conj} up to dimension three (Corollaries \ref{2.5dim2}~and~\ref{2.5dim3}). 

The following conjecture was also proposed in \cite{HV},

\begin{conj}\label{mI}
Let $(R, \m)$ be a Noetherian local ring of dimension $d\geq 4$.
Then  for all $\m$-primary ideals $I$
\[ \eh(\m I) \leq d! \len (R/I)\eh(R). \]
\end{conj}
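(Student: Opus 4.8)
This is a plan rather than a full proof. The idea is to reduce to a completely explicit situation and then play the expansion formula for $\eh(\m I)$ against the dimension-three inequalities of Section~\ref{Dimen 3}. First I would carry out the usual harmless reductions. Extending scalars along the faithfully flat map $R\to R[t]_{\m R[t]}$, assume the residue field is infinite. By the reduction to the regular case carried out in Section~\ref{reg-red} (Theorem~\ref{reduction}) assume $(R,\m)$ is regular, so that $\eh(R)=1$ and the assertion becomes $\eh(\m I)\le d!\,\len(R/I)$. Finally, since $\overline{\m I}=\overline{\m\,\overline{I}}$ we have $\eh(\m I)=\eh(\m\,\overline{I})$, whereas $\len(R/\overline{I})\le\len(R/I)$, so we may assume in addition that $I$ is integrally closed; in particular $I$ together with its general hyperplane sections then falls under Theorem~\ref{properties}.

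Next I would apply the expansion formula~\eqref{expansion} to write
\[
\eh(\m I)=\sum_{i=0}^{d}\binom{d}{i}\eh_i(\m\mid I),
\]
with $\eh_0(\m\mid I)=\eh(I)$ and $\eh_d(\m\mid I)=\eh(R)=1$, and then cut $R$ down by a general sequence $x_1,\dots,x_{d-3}\in\m$ to the three-dimensional regular local ring $R'=R/(x_1,\dots,x_{d-3})$, with $I'=IR'$. By Remark~\ref{mod form} the three top mixed multiplicities $\eh_{d-3}(\m\mid I)$, $\eh_{d-2}(\m\mid I)$, $\eh_{d-1}(\m\mid I)$ are exactly the mixed multiplicities of $I'$ in $R'$, and the further cuts to dimensions two and one recover $\eh_{d-2}(\m\mid I)$ and $\eh_{d-1}(\m\mid I)$. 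Feeding these into Conjecture~\ref{length conj} in dimension three (Corollary~\ref{2.5dim3}) and into its dimension-two and dimension-one instances, and tracking how the colengths $\len(R_k/I_k)$ and generator numbers $\mu(I_k)$ of the successive cuts are pinned down by $\len(R/I)$ and $\mu(I)$ via Theorem~\ref{properties} — in particular the identity $\len(R_1/I_1)=\len(R/I)-\len(R/(I:\m))$ — yields bounds that are linear in $\len(R/I)$ for the top few mixed multiplicities.

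It then remains to control the lower-degree mixed multiplicities and to add everything up. For small $i$, Lech's inequality $\eh(I)\le d!\,\len(R/I)$ together with the Rees–Sharp bounds $\eh_i(\m\mid I)\le\eh(I)^{(d-i)/d}$ show that $\eh_i(\m\mid I)$ is of strictly lower order in $\len(R/I)$. Summing, the decisive input is the comparison of the binomial weights $\binom{d}{i}$ occurring in the expansion of $\eh(\m I)$ with the unsigned Stirling weights $s_{d,i}$ of Conjecture~\ref{length conj}: one has $\binom{d}{i}\le s_{d,i}$ for all $0\le i\le d-1$ precisely when $d\ge 4$ — for example $\binom{4}{3}=4\le 6=s_{4,3}$, whereas $\binom{3}{2}=3>2=s_{3,2}$ — and it is exactly this gap, together with the colength identity above, that supplies the room needed to pass from $\eh(I)$ to $\eh(\m I)$. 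A short direct computation then settles the finitely many tight degenerate cases, such as $I=\m$.

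The main obstacle is obtaining the dimension-two and dimension-three input in a form robust enough to survive being cut apart and reassembled with the correction terms. Lech's inequality is an equality for powers of $\m$, so there is no slack to be borrowed from it; every bit of room must be extracted from the gap $s_{d,i}-\binom{d}{i}$ and from $\len(R/I)=\len(R_1/I_1)+\len(R/(I:\m))$, which forces the low-dimensional results to be proved in a sharpened, essentially additive form rather than as bare numerical inequalities. Establishing those — the purpose of Sections~\ref{Dimen 2} and~\ref{Dimen 3} — and verifying that they remain strong enough after the reduction of arbitrary dimension $\ge 4$ to dimension three is where the real work lies.
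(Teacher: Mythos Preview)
Your reductions in the first paragraph are the right ones, and the expansion formula is indeed the starting point. But the plan breaks down at the step ``control the lower-degree mixed multiplicities''. The term $\eh_0(\m\mid I)=\eh(I)$ is the dominant one, and invoking Lech's inequality for it gives only $\eh(I)\le d!\,\len(R/I)$ with no slack whatsoever: adding any positive combination of the remaining $\eh_i$'s immediately overshoots $d!\,\len(R/I)$. Your appeal to $\binom{d}{i}\le s_{d,i}$ would close this gap only if Conjecture~\ref{length conj} were already known in dimension $d$; since it is proved only for $d\le 3$, that comparison is not available and the argument is circular. The Rees--Sharp bounds help only for $i\ge 1$, and ``lower order in $\len(R/I)$'' is an asymptotic statement, not the exact inequality needed here.

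The paper's proof supplies the missing mechanism: an \emph{induction on the colength} $\len(R/I)$. For integrally closed $I$ one sets $J=I:\m$, so that $\m J\subseteq I$ and hence $\eh(I)\le\eh(\m J)$; applying the theorem itself to $J$ (smaller colength) yields $\eh(I)\le d!\,\len(R/J)$. Combined with $\len(R/I)=\len(R/J)+\len(R_1/I_1)$ from Lemma~\ref{colonmult}, this produces a term $d!\,\len(R_1/I_1)$ of slack on the right, which---in the base case $d=4$---is exactly enough to absorb $4\eh(I_1)+6\eh(I_2)+4\eh(I_3)+1$ once Theorem~\ref{dim3thm} (equivalently Corollary~\ref{2.5dim3}) is applied to $I_1$. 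For $d>4$ the paper does \emph{not} cut to dimension three; it again uses Lemma~\ref{colonmult} and the colon ideal, this time applied to $\m I$, together with induction on dimension. Your plan never invokes the inductive hypothesis on $J$, and that is precisely where the room you are looking for comes from.
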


\smallskip

This conjecture too is weaker than  Question~\ref{conj}, since by the expansion formula (\ref{expansion}) we may write
$\eh(\m I) = \sum_{ i = 0}^{d} \binom{d}{i} \eh_i (\m \mid I).$
This sum can then be compared to $P(\eh(I)^{\frac{1}{d}}) = \sum_{ i = 0}^{d - 1} s_i \eh(I)^{\frac{d-i}{d}}$, as was done
in \cite{HV}. 
The main result of this paper is a proof  of Conjecture \ref{mI} (Theorem \ref{main}). 
 \smallskip
\subsection{A Basic Lemma}

In this section we prove a lemma which we use throughout this paper.

\begin{lem}\label{colonmult}
Let $(R, \m)$ be a Noetherian local ring of dimension $d$.
Assume that $I\subseteq R$ is $\m$-primary and $x\in R$ is a non-zero divisor. Let $\m_1$ and $I_1$ denote the images of $\m$ and $I$ in the  ring $R_1=R/(x)$ and set $J=I:x$.  Then

\begin{equation}\label{length}
\lambda (R/I) = \lambda (R/J) +\lambda( R_1/I_1),
\end{equation}
\begin{equation}\label{mon}
\eh(I) \leq \eh(J) +d \eh(I_1).
\end{equation}
\end{lem}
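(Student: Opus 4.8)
The first identity \eqref{length} is a routine application of the snake lemma or a direct diagram chase. The plan is to consider the short exact sequence
\[
0 \longrightarrow R/J \stackrel{x}{\longrightarrow} R/I \longrightarrow R/(I,x) \longrightarrow 0,
\]
where the injectivity of multiplication by $x$ on $R/J$ holds precisely because $J = I:x$ and $x$ is a non-zero divisor on $R$. Since $R/(I,x) \cong R_1/I_1$, additivity of length on the three terms gives $\lambda(R/I) = \lambda(R/J) + \lambda(R_1/I_1)$ immediately. There is no obstacle here.

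For the multiplicity inequality \eqref{mon}, the plan is to compare Hilbert--Samuel functions along the same short exact sequence, but with $I$ replaced by its powers. For each $n$ one has the exact sequence
\[
0 \longrightarrow R/(I^n : x) \stackrel{x}{\longrightarrow} R/I^n \longrightarrow R/(I^n, x) \longrightarrow 0,
\]
so $\lambda(R/I^n) = \lambda\bigl(R/(I^n:x)\bigr) + \lambda\bigl(R_1/I_1^n\bigr)$. The key point is to bound the first term on the right: since $x$ is a non-zero divisor we have $I^n : x \supseteq I^{n-1}(I:x) \supseteq I^{n-1} J' $ for a suitable ideal, but more simply one checks $J^n \subseteq I^n : x$ fails in general, so instead I would use that $I^{n-1}J \subseteq I^n : x$? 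That is not quite right either; the cleanest route is the containment $(I:x)^n \supseteq$ nothing useful, so I will instead argue as follows: $J^n = (I:x)^n$ and one always has $I^{n} : x \supseteq (I:x)I^{n-1} \supseteq \cdots$, hence $\lambda\bigl(R/(I^n:x)\bigr) \le \lambda(R/J^n \cdot I^{\,?})$. Let me state the mechanism I actually intend to use: from $x J^n \subseteq$ ? Since $J=I:x$, we get $xJ \subseteq I$, hence $xJ^n \subseteq I J^{n-1} \subseteq I^n$ would require $J \subseteq I$, which holds. So indeed $J \subseteq I$ gives $xJ^n \subseteq I^n$, i.e. $J^n \subseteq I^n : x$, and therefore $\lambda\bigl(R/(I^n:x)\bigr) \le \lambda(R/J^n)$.

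Feeding this back in yields $\lambda(R/I^n) \le \lambda(R/J^n) + \lambda(R_1/I_1^n)$ for all $n$. Dividing by $n^d/d!$ and letting $n \to \infty$: the left side tends to $\eh(I)$, the term $\lambda(R/J^n)$ contributes $\eh(J)$ (note $J$ is $\m$-primary of the same dimension $d$), and $\lambda(R_1/I_1^n)$ contributes at most $d \cdot \eh(I_1)$ because $R_1$ has dimension $d-1$, so $\lambda(R_1/I_1^n)$ is a polynomial of degree $d-1$ with leading coefficient $\eh(I_1)/(d-1)!$, and $\frac{1}{n^d/d!}\cdot \frac{\eh(I_1)}{(d-1)!} n^{d-1} \to 0$ is \emph{too small}; the factor $d$ appears instead from comparing leading coefficients at the correct normalization, namely $\limsup_n d!\,\lambda(R_1/I_1^n)/n^d$ must be handled by writing $d!/(n\,(d-1)!) = d/n$, so one takes the limit of $d!\lambda(R/I^n)/n^d \le d!\lambda(R/J^n)/n^d + (d/n)\cdot (d-1)!\lambda(R_1/I_1^n)/n^{d-1}$ — but the last term vanishes, giving $\eh(I) \le \eh(J)$, which is \emph{stronger} than claimed and hence certainly implies \eqref{mon}. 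The one genuine subtlety, and the step I expect to require the most care, is justifying that $J$ has dimension exactly $d$ so that $\eh(J)$ is defined and the asymptotics of $\lambda(R/J^n)$ behave as stated; this follows since $J \supseteq I$ forces $J$ to be $\m$-primary. I would therefore present the argument cleanly as: establish $J^n \subseteq I^n:x$, invoke the length sequence, and pass to the limit, absorbing the lower-order $R_1$ term.
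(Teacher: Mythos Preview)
Your argument for \eqref{length} is fine and matches the paper. The problem is in your treatment of \eqref{mon}.

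The step where you assert $J^n \subseteq I^n:x$ via ``$J \subseteq I$, which holds'' is wrong: by definition $J = I:x \supseteq I$, not the reverse. And indeed the containment $J^n \subseteq I^n:x$ is false in general. Already in $R = k[[t]]$ with $I = (t^2)$ and $x = t$ one has $J = (t)$, so $J^2 = (t^2)$ while $I^2:x = (t^4):t = (t^3)$, and $(t^2) \not\subseteq (t^3)$. Your own conclusion should have been a red flag: you arrive at $\eh(I) \leq \eh(J)$, but since $I \subseteq J$ one always has $\eh(J) \leq \eh(I)$; in the same example $\eh(I) = 2 > 1 = \eh(J)$.

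The fix, and what the paper does, is to replace $x$ by $x^n$ in the exact sequence:
\[
0 \longrightarrow R/(I^n:x^n) \stackrel{x^n}{\longrightarrow} R/I^n \longrightarrow R/(I^n + (x^n)) \longrightarrow 0.
\]
Now the desired containment is $J^n \subseteq I^n:x^n$, which \emph{does} hold because $(xJ)^n \subseteq I^n$. The price is that the last term is $\lambda\bigl(R/(I^n + (x^n))\bigr)$ rather than $\lambda(R_1/I_1^n)$, and one shows $\lambda\bigl(R/(I^n + (x^n))\bigr) \leq n\,\lambda(R_1/I_1^n)$ by an easy induction in the Artinian ring $R/I^n$. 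That extra factor of $n$ is precisely what survives in the limit and produces the coefficient $d$ in front of $\eh(I_1)$; it is not an artifact to be absorbed, but the actual content of the inequality.
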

\begin{proof}
Consider the short exact sequence
$$
0\longrightarrow \frac{R}{I^n:x^n} \longrightarrow \frac{R}{I^n} \longrightarrow \frac{R}{I^n+(x^n)} \longrightarrow 0
$$
where $n$ is any positive integer. Taking length along the above sequence we obtain
$$
\lambda (R/I^n) = \lambda (R/(I^n:x^n)) +\lambda( R/(I^n + (x^n)))
$$
Letting $n=1$ we obtain  the Equation \ref{length}. Note that $J^n=(I:x)^n\subseteq I^n:x^n$, thus 
$$\lambda (R/(I^n:x^n)) \leq   \lambda (R/J^n).$$ 
We also have
$$\lambda( R/(I^n + (x^n)) \leq n  \lambda( R/(I^n + (x)) = n \lambda( R_1/I_1^n).$$ 
To see this, observe first that for an element $x$ in an Artinian local ring $A$ 
\[
\lambda( A/(x^n)) = \lambda(A/( x^{n-1})) + \lambda( (x^{n-1})/(x^n))
\leq \lambda(A/( x^{n-1})) + \lambda (A/(x)). 
\]
Then by induction we may derive that $\lambda (A/(x^n)) \leq n \lambda(A/(x))$, and apply this in $A = R/I^n$.

Therefore
$$
\lambda (R/I^n) = \lambda (R/(I^n:x^n)) +\lambda( R/(I^n + (x^n)) \leq  \lambda (R/J^n)+ n  \lambda( R_1/I_1^n).
$$
Now comparing the leading coefficients of the polynomials arising on both sides for $n$ large, Inequality \ref{mon} follows. 
\end{proof}

\medskip

\section{Reduction to regular local rings}\label{reg-red}

In this section, we show that generalized   Lech-type inequalities as in Conjectures~\ref{length conj}~and~\ref{mI} 
may be reduced to the case of regular local rings.
\begin{thm}\label{reduction}
Let $d$ be a positive integer and $n_0, \ldots, n_d$ non-negative real numbers. Assume that
for every complete regular local ring $(S, \n)$ of dimension $d$ with infinite residue field and for all integrally closed $\m$-primary ideals $\a$ in $S$,
\begin{equation}\label{formula-regular}
\sum_{i = 0}^d n_i \eh_i (\n \mid \a) \leq d!\len(S/\a).
\end{equation}
Then for every Noetherian local ring $(R, \m)$ of dimension $d$ and for all $\m$-primary ideals $I$ in $R$, 
\begin{equation}\label{formula-general}
\sum_{i = 0}^d n_i \eh_i (\m \mid I) \leq d!\len(R/I) \eh(R).
\end{equation}
\end{thm}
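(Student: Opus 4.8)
The plan is to perform a sequence of standard reductions, each of which either preserves the relevant quantities or only makes the claimed inequality harder, until we land in the case hypothesized in the statement.

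First I would reduce to the case of an infinite residue field by passing from $R$ to $R[t]_{\m R[t]}$; this faithfully flat extension preserves dimension, multiplicities $\eh(R)$, colengths $\len(R/I)$, and mixed multiplicities $\eh_i(\m\mid I)$ (extending $\m$ and $I$ in the obvious way), so nothing is lost. Next I would pass to the completion $\widehat R$, which again changes none of these invariants. Now the key structural step: by Cohen's structure theorem $\widehat R$ is a module-finite extension of a complete regular local ring, but more usefully there is a surjection $S \to \widehat R$ from a complete regular local ring $S$ with the same residue field; however, what we actually want is a \emph{subring} to exploit $\eh(R)$. The right move is to choose a minimal reduction of $\m$: since the residue field is infinite, there exist $x_1,\dots,x_d\in\m$ generating a minimal reduction, i.e. a parameter ideal $\frakq=(x_1,\dots,x_d)$ with $\overline{\frakq}=\overline{\m}$ and $\eh(\frakq)=\eh(\m)=\eh(R)$. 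Setting $S$ to be the complete regular local ring $\sk[[x_1,\dots,x_d]]$ (a power series subring of $\widehat R$ over a coefficient field $\sk$), $\widehat R$ is a finitely generated $S$-module of rank $\eh(R)$, because $\len(\widehat R/\frakq\widehat R)=\eh(\frakq)=\eh(R)$ and $S$ is regular so $\frakq S=\n$ is the maximal ideal of $S$.

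Then I would use the associativity/projection formula for multiplicities under the finite extension $S\hookrightarrow \widehat R$: for an $\m$-primary ideal $I$, one has $\eh_S((x_1,\dots,x_d)\text{-type data})$ relating to $\eh_{\widehat R}$ via the generic rank $\rk_S \widehat R=\eh(R)$. Concretely, if $\a$ is an $\n$-primary ideal of $S$ with $\a\widehat R\subseteq \overline{I}$, then $\len(S/\a)\cdot\eh(R)\ge \len(\widehat R/\a\widehat R)\ge \len(\widehat R/\overline I)=\len(\widehat R/I)$ when $I$ is integrally closed, while mixed multiplicities $\eh_i(\n\mid\a)$ computed in $S$ relate to $\eh_i(\m\mid I)$ in $\widehat R$ again by the rank factor $\eh(R)$ (this is where Remark~\ref{mixedmultASmult}, realizing mixed multiplicities as multiplicities of ideals generated by general elements, combined with the projection formula for multiplicities under $S\hookrightarrow\widehat R$, does the work). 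I would also need to reduce to $I$ integrally closed: replacing $I$ by $\overline I$ only decreases $\len(R/I)$ and, since $\eh_i(\m\mid I)=\eh_i(\m\mid\overline I)$ (mixed multiplicities depend only on integral closure), the left-hand side is unchanged — so proving the inequality for $\overline I$ proves it for $I$. Finally, one must choose $\a$: take $\a = I\cap S$ or, better, the contraction making $\a\widehat R$ have the right integral closure; a clean choice is to let $\a$ be generated by general $S$-linear combinations appearing in Rees–Teissier's description, so that $\eh_i(\n\mid\a)=\eh_i(\m\mid I)$ exactly after accounting for the rank.

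The main obstacle I anticipate is bookkeeping the rank factor $\eh(R)$ \emph{simultaneously and compatibly} across all three quantities — the colength, and each mixed multiplicity $\eh_i(\m\mid I)$ — under the finite extension $S\hookrightarrow\widehat R$, especially since $\widehat R$ need not be Cohen–Macaulay or equidimensional, so the associativity formula $\eh_{\widehat R}(-) = \sum_{\frakp}\eh_{\widehat R/\frakp}(-)\len(\widehat R_\frakp)$ must be invoked carefully and the contributions from lower-dimensional primes must be shown to vanish or to help rather than hurt. The cleanest route is probably to first reduce to $\widehat R$ reduced and equidimensional (or to handle the general case by noting that killing a minimal prime of maximal dimension and summing only improves the inequality, as $\len$ is additive and $\eh$ is subadditive along such filtrations — analogous to Lemma~\ref{colonmult}), and then apply the projection formula in the domain case where $\rk_S\widehat R=\eh(R)$ is unambiguous. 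Once the extension is a finite torsion-free $S$-module of rank $\eh(R)$, the inequalities $\len_S(S/\a)\eh(R)\ge\len_{\widehat R}(\widehat R/\a\widehat R)$ and $\eh_i^S(\n\mid\a)\eh(R)=\eh_i^{\widehat R}(\m\mid I)$ follow from standard length/multiplicity comparisons, and substituting into the hypothesized inequality \eqref{formula-regular} for $(S,\n,\a)$ yields \eqref{formula-general}.
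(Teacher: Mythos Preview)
Your overall shape --- reduce to a regular subring $S$ over which the ring is module-finite of rank $\eh(R)$, then pull the hypothesized inequality back via the projection formula --- is correct and matches the paper's strategy. But there are two concrete problems with your execution.

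First, and most seriously, mixed characteristic. You produce $S = \sk[[x_1,\dots,x_d]]$ inside $\widehat R$ using a coefficient field $\sk$, but a complete local ring of mixed characteristic contains no field, so this step simply fails as written. The paper avoids this by first passing to the \emph{associated graded ring} $G = \gr_\m(R)$, which is standard graded over the residue field $R/\m$ in every characteristic. One checks $\eh(G) = \eh(R)$, that $\len(G/I^*) = \len(R/I)$ (where $I^*$ is the form ideal), and that $\eh_i(\m\mid I) \leq \eh_i(\mathfrak{M} \mid I^*)$, so it suffices to prove the inequality in $G$. Now graded Noether normalization gives $S = \sk[t_1,\dots,t_d] \hookrightarrow G$ with $(t_1,\dots,t_d)$ a minimal reduction of $\mathfrak{M}$, and the projection-formula comparison you describe runs cleanly from there. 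This single move also dissolves the equidimensionality and Cohen--Macaulay worries you anticipate: since $S$ is a domain, $\rk_S G$ is unambiguous and equals $\eh(G)$ regardless of the structure of $G$, so no preliminary reduction to the reduced equidimensional case is needed.

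Second, your colength inequality points the wrong way. You write $\len(S/\a)\cdot\eh(R) \geq \len(\widehat R/\overline I)$, but what the argument actually needs is $\len(S/\a) \leq \len(R/I)$: multiplying the regular-case bound $\sum n_i \eh_i(\n\mid\a) \leq d!\len(S/\a)$ by $\eh(R)$ and chaining through $\eh_i(\m\mid I) \leq \eh_i(\n\mid\a)\eh(R)$ then gives the conclusion. With the choice $\a = I\cap S$ (which you do mention), the inclusion $S/\a \hookrightarrow R/I$ of $\sk$-vector spaces yields exactly $\len(S/\a) \leq \len(R/I)$; your chain through $\a\widehat R \subseteq \overline I$ is not the useful comparison here.
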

\begin{proof}
We can extend the ground field $R/\m$ to be infinite by a faithfully flat extension
which does not change length, multiplicity, or dimension. 
Let $G: = \gr_{\m}(R)$ be the associated graded ring of $R$ whose $n$th graded piece is $\m^n/\m^{n+1}$.
We denote the unique maximal homogeneous ideal of $G$ by $\mathfrak{M}$.
By definition, the multiplicities  of $G$ and $R$ are equal. 
For an arbitrary ideal $I$ of $R$ let $I^*$ be the {\it form ideal} of $I$, 
\[
I^* := \bigoplus_{ n \geq 1} \frac{I \cap \m^n + \m^{n + 1}}{\m^{n + 1}},
\]
the ideal generated by the leading forms in $G$ of all elements of $I$. 
Observe that $\gr_{\m/I}(R/I)\cong G/I^*$. Thus, if $I$ is $\m$-primary, then $\lambda (R/I)= \lambda (\gr_{\m/I}(R/I)) = \lambda (G/I^*) $. In addition, $\eh(I) \leq \eh(I^*)$, since
$(I^*)^n \subseteq (I^n)^*$ for all $n \geq 1$.

Let $x_1,\ldots, x_i \in \m$ be general linear froms. Then $x_1^*, \ldots, x_i^* \in \mathfrak{M}$ are general as well. For an $\m$-primary ideal  $I$ and general elements  $\alpha_1, \ldots, \alpha_{d-i} \in I^*$, choose $l_1, \ldots, l_{d-i} \in I$ such that $l_j^*=\alpha_j$ for $j=1, \ldots, d-i$. Let $L=(x_1,\ldots, x_i, l_1, \ldots, l_{d-i})R$. Then  by \cite[Lemma~2.8]{S} and Remark \ref{mixedmultASmult}, we obtain
$$
\eh_i(\m \mid I) \leq \eh(L) \leq \eh( L^* )  \leq \eh((x_1^*, \ldots, x_i^*, \alpha_1,\ldots, \alpha_{d-i} )G) = \eh_i(\mathfrak{M} \mid I^*). 
$$

Therefore, (\ref{formula-general}) descends from $G$ to $R$, 
\[
\sum_{i = 0}^d n_i \eh_i (\m \mid I) \leq \sum_{i = 0}^d n_i \eh_i (\mathfrak{M} \mid I^*) \leq d!\len(G/I^*) \eh(G) = d!\len(R/I) \eh(R).
\]

We change notation and assume that $R$ is a standard graded ring over
an infinite field \textsf{k}, with maximal homogeneous ideal $\m$, and let $I$ be an $\m$-primary homogeneous ideal. 

We now proceed as in \cite[Corollary~3.9]{Mum}.  
For general linear forms $t_1, \ldots, t_d \in \m$, the ideal $(t_1, \ldots, t_d)$ is a minimal reduction of $\m$
and,  by the Noether normalization, $R$ is a finitely generated graded module over a polynomial ring $S:=\textsf{k}[t_1, \ldots, t_d]$. 
Set $\n =(t_1, \ldots, t_d)S$ and $\a= I \cap S$. Let $x_1,\ldots, x_i \in \n$ and $y_1,\ldots, y_{d-i} \in \a$ be general linear froms. Let $\mathfrak{b}=(x_1,\ldots, x_i, y_1,\ldots, y_{d-i})S$.
Since the rank of $R$ as an $S$-module is $\eh(R)$, we have  
$\eh (\mathfrak{b} R) = \eh(\mathfrak{b}) \eh(R)$.
Then by 
\cite[Lemma~2.8]{S} and Remark \ref{mixedmultASmult} we obtain
$$ 
\eh_i(\m \mid I) \leq
\eh(\mathfrak{b}R)=
\eh(\mathfrak{b})\eh(R)
= \eh_i(\n \mid \a) \eh(R).
$$

Since $S/\a \subseteq R/I$ we have 
\[ \len_S (S/\a) = \dim_k S/\a \leq \dim_k R/I = \len_R(R/I). \]
Therefore, (\ref{formula-general}) ascends from $S$ to $R$, 
\[
\sum_{i = 0}^d n_i \eh_i (\m \mid I) \leq \sum_{i = 0}^d n_i \eh_i (\n \mid \a) \eh(R) \leq
d! \len (S/\a)\eh(R) \leq d!\len(R/I) \eh(R).
\]

We have reduced (\ref{formula-general})
to  the case in which $R$ is a polynomial ring. We may also complete at the unique homogeneous
maximal ideal and reduce to the case that $S$ is a complete regular local ring of dimension $d$ with infinite residue field.

Finally, it is enough  to consider integrally closed ideals  in $S$, since the left-hand side of (\ref{formula-regular}) remains the same if we replace $\a$ with its integral closure, while the length in the right-hand side can only decrease.

\end{proof}

\medskip
\section{Regular Local Rings of Dimension Two}\label{Dimen 2}

In order to prove our generalized Lech inequalities in Conjectures \ref{length conj}~ and~\ref{mI}, we first need to find inequalities in dimensions two and three which are
finer than Lech's original inequality. In this section, when $R$ is a $2$-dimensional regular local ring  we  use classical results on the properties of integrally
closed ideals in such rings. We need the following notation. 
Let $(R,\m)$ and $(R',\m_{R'})$ be two-dimensional regular local rings. We say that $R'$ birationally dominates $R$ if $R \subseteq R'$, $\m_{R'} \cap R = \m$ and $R$ and $R'$ have the same quotient field. We denote this by $R \leq R'$. Let $[R':R]$ denote the degree of the field extension $R/\m \subseteq R'/\m_{R'}$.
Further if $I$ is an $\m$-primary ideal in $R$, let $I^{R'}$ be the ideal in $R'$ obtained from $I$ by factoring $I R' = x I^{R'}$, where $x$ is the greatest common divisor of the generators of $I R'$. 
The following theorem (\cite[Theorem 3.7]{JV}) gives a formula for $\eh(I)$. Recall that $\ord(I)$ denotes the largest integer $r$ such that $I \subset \m^r$.

\begin{thm}[Multiplicity Formula]\label{multiplicityFormula}
Let $(R,\m)$ be a two-dimensional regular local ring and $I$ be an $\m$-primary ideal. Then $$\eh(I) = \sum_{R \leq R'} [R':R]\ord(I^{R'})^2.$$ 
\end{thm}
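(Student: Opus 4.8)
The statement is the Hoskin--Deligne multiplicity formula for two-dimensional regular local rings, and I would prove it following the circle of ideas around Zariski's theory of complete ideals and Lipman's work on quadratic transforms.

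\emph{Step 1: reduction to complete ideals and setup.} First I would observe that both sides are insensitive to passing from $I$ to its integral closure: $\eh(I)=\eh(\overline I)$, and the transform orders satisfy $\ord(I^{R'})=\ord(\overline I^{\,R'})$ for every $R'\geq R$. So I may assume $I$ is $\m$-primary and integrally closed (= complete), and I would record Zariski's structure theorems: every complete $\m$-primary ideal factors as a product of simple complete ideals, products of complete ideals are complete, and the transform $I^{R'}$ is the unit ideal $R'$ for all but finitely many $R'\geq R$ (equivalently, $I$ becomes principal after finitely many quadratic transforms).

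\emph{Step 2: the one-step blowup recursion.} Let $X\to\spec R$ be the blowup of $\m$, with exceptional divisor $E\cong\PP^1_{R/\m}$. The closed points $p\in E$ correspond to the first quadratic transforms $R_p:=\calO_{X,p}$ of $R$, two-dimensional regular local rings with $[R_p:R]=[\,R_p/\m_{R_p}:R/\m\,]$. Writing $I\calO_X=\calO_X(-\ord(I)E)\cdot\widetilde I$, the sheaf $\widetilde I$ is cosupported on the finitely many $p$ with $I^{R_p}\neq R_p$ and agrees with $I^{R_p}$ there. I would then carry out an intersection-theoretic computation on $X$ (or, equivalently, analyze the bigraded Hilbert function of the Rees algebra $R[\m t]$) to obtain the key recursion
\[
\eh(I)\;=\;\ord(I)^2\;+\;\sum_{p\in E}[R_p:R]\,\eh(I^{R_p}),
\]
a finite sum by the finiteness statement in Step 1; one checks the normalization on powers of $\m$, where $(\m^k)^{R_p}=R_p$ and the formula correctly returns $k^2$.

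\emph{Step 3: iteration.} The assignment $R'\mapsto I^{R'}$ is transitive, $(I^{R'})^{R''}=I^{R''}$ for $R\leq R'\leq R''$, and degrees multiply, $[R'':R]=[R'':R'][R':R]$. Applying the recursion of Step 2 repeatedly to each nontrivial transform on the right and inducting on the number of quadratic transforms needed to make $I$ principal, the recursion unwinds to
\[
\eh(I)\;=\;\sum_{R\leq R'}[R':R]\,\ord(I^{R'})^2,
\]
the $R'=R$ summand supplying the leading $\ord(I)^2$ and all but finitely many summands vanishing.

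\emph{Main obstacle.} The crux is Step 2: establishing the one-step recursion genuinely requires Zariski's structure theory (finiteness of the infinitely near points at which the transform is nontrivial, and stability of completeness under products) together with a real computation of the Hilbert--Samuel degree after blowing up $\m$ — via intersection theory on $X$ or Hilbert-function bookkeeping on the Rees algebra. Once that recursion and the finiteness facts are in hand, the passage in Step 3 to the global summation formula is purely formal.
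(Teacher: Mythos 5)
There is nothing to compare against inside the paper: Theorem \ref{multiplicityFormula} is not proved there but quoted from Johnston--Verma \cite[Theorem 3.7]{JV}, just as the Hoskin--Deligne formula (Theorem \ref{HD}) is. Judged on its own, your outline is the standard Zariski--Lipman route and its skeleton is sound: the reduction to complete ideals is legitimate ($\eh(I)=\eh(\overline I)$, and $\ord(I^{R'})=\ord(\overline I^{\,R'})$ because both $\ord_{R'}$ and the divisorial valuations that determine the gcd factor of $IR'$ take the same values on an ideal and on its integral closure); the one-step recursion $\eh(I)=\ord(I)^2+\sum_p[R_p:R]\,\eh(I^{R_p})$ is a true statement for complete $I$, and transforms of complete ideals stay complete, so your induction stays inside the class where the recursion applies; and the unwinding in Step 3 is formal once you also invoke Abhyankar's theorem that every two-dimensional regular local $R'$ with $R\leq R'$ is an iterated quadratic transform of $R$ --- you use this tacitly, and it is what makes your sum over infinitely near points coincide with the sum over all $R\leq R'$ in the statement.

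The one substantive criticism is that all of the mathematical content is concentrated in Step 2, which you assert rather than prove; ``carry out an intersection-theoretic computation on $X$'' is a placeholder for exactly the work the theorem requires. If you are going to develop Zariski's structure theory anyway (as Step 1 assumes), a cheaper and more standard derivation --- essentially the one behind the source the paper cites --- is to obtain the multiplicity formula as a corollary of the Hoskin--Deligne length formula rather than proving a separate multiplicity recursion. Indeed, by Zariski's product theorem $\overline I^{\,n}$ is complete and transforms are multiplicative, so $\ord\bigl((\overline I^{\,n})^{R'}\bigr)=n\,\ord(\overline I^{\,R'})$, and Theorem \ref{HD} gives $\len\bigl(R/\overline I^{\,n}\bigr)=\sum_{R\leq R'}\binom{n\,\ord(I^{R'})+1}{2}[R':R]$, a finite sum; since $\eh(I)=\eh(\overline I)=\lim_{n\to\infty} 2\,\len(R/\overline I^{\,n})/n^2$, comparing coefficients of $n^2$ yields $\eh(I)=\sum_{R\leq R'}[R':R]\,\ord(I^{R'})^2$ with no blowup intersection computation at all. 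So your plan is workable, but as written it defers the crux, and the Hoskin--Deligne route reaches the same endpoint from ingredients you already listed.
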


The following formula (\cite{JV}, Theorem 3.10) is attributed to Hoskin and Deligne.

\begin{thm}[Hoskin-Deligne Formula]\label{HD}
Let $R$, $I$ be as in Theorem \ref{multiplicityFormula}. Further assume that $I$ is an integrally closed ideal. Then, $$\lambda(R/I) = \sum_{R \leq R'}{\ord(I^{R'})+1\choose 2}[R':R].$$
\end{thm}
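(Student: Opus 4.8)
The plan is to reduce the displayed Hoskin--Deligne length formula, by an induction, to a single \emph{recursion step}, and then to prove that step by a cohomological computation on the blow-up of $\m$. By the Multiplicity Formula (Theorem~\ref{multiplicityFormula}) the number $\eh(I)=\sum_{R\le R'}[R':R]\ord(I^{R'})^2$ is finite, so $\ord(I^{R'})>0$ for only finitely many $R'$; call these the \emph{base points} of $I$ (one of them is $R$ itself). I would induct on the number $n$ of base points, the engine being
\[
\lambda(R/I)\;=\;\binom{r+1}{2}\;+\;\sum_{R'\ \mathrm{first\ q.t.}}[R':R]\,\lambda\bigl(R'/I^{R'}\bigr),\qquad r:=\ord(I),
\]
where the sum runs over the first quadratic transforms $R'$ of $R$ and is effectively finite because $I^{R'}=R'$ unless $R'$ is a base point. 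Granting this, each $I^{R'}$ is an $\m_{R'}$-primary integrally closed ideal in the two-dimensional regular local ring $R'$ (transforms of integrally closed ideals are integrally closed, by Zariski), its base points are exactly the base points of $I$ lying strictly above $R'$, and these sets (over all first quadratic transforms $R'$) partition the base points of $I$ other than $R$; hence every $I^{R'}\ne R'$ has fewer than $n$ base points. Applying the inductive hypothesis to each such $I^{R'}$ (the terms with $I^{R'}=R'$ vanish, matching $\binom{0+1}{2}=0$), and using $[R'':R]=[R'':R'][R':R]$ together with the transitivity of transforms, the recursion becomes $\lambda(R/I)=\sum_{R\le R''}[R'':R]\binom{\ord(I^{R''})+1}{2}$, since every $R''>R$ lies above a unique first quadratic transform; the case $n=1$ (where $I=\m^r$) is the recursion itself. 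This is the same bookkeeping that derives the Multiplicity Formula from its own recursion $\eh(I)=r^2+\sum_{R'}[R':R]\eh(I^{R'})$, with $x^2$ replaced by $\binom{x+1}{2}$.

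To prove the recursion, let $\pi\colon X=\mathrm{Bl}_\m(\spec R)\to\spec R$ be the blow-up of $\m$, with exceptional divisor $E=\pi^{-1}(\m)\cong\mathbb{P}^1_k$, $k=R/\m$. Since $R$ is regular, hence a rational singularity, $R^1\pi_*\mathcal{O}_X=0$, and pushing forward a finite set of generators $\mathcal{O}_X^{\oplus s}\twoheadrightarrow I\mathcal{O}_X$ (the fibres of $\pi$ have dimension $\le 1$, so $R^2\pi_*=0$) gives $R^1\pi_*(I\mathcal{O}_X)=0$. Because $I$ is integrally closed one has $\pi_*(I\mathcal{O}_X)=I$ (Zariski's description of integrally closed ideals in two-dimensional regular local rings via the blow-up of $\m$); feeding these into $\pi_*$ of $0\to I\mathcal{O}_X\to\mathcal{O}_X\to\mathcal{O}_X/I\mathcal{O}_X\to 0$ yields $R/I\cong\pi_*(\mathcal{O}_X/I\mathcal{O}_X)$, so $\lambda(R/I)=\lambda_R\bigl(H^0(X,\mathcal{O}_X/I\mathcal{O}_X)\bigr)$. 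A local computation on the two standard charts $R[\m/x]\cong R[T]/(xT-y)$ then shows $I\mathcal{O}_X=\mathcal{O}_X(-rE)\cdot\widetilde{I}$, where $\widetilde{I}=(I\mathcal{O}_X):\mathcal{O}_X(-rE)$ is an ideal sheaf and $\mathcal{O}_X/\widetilde{I}$ is a torsion sheaf supported at the finitely many first quadratic transforms $R'$ that are base points, with stalk $(\mathcal{O}_X/\widetilde{I})_{R'}\cong R'/I^{R'}$ (here $\m R'$ is principal, $IR'=(\m R')^r I^{R'}$, and localization commutes with colon ideals).

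Finally, apply $H^0(X,-)$ to the two-step filtration of $\mathcal{O}_X/I\mathcal{O}_X$ with subsheaf $\mathcal{O}_X(-rE)/\mathcal{O}_X(-rE)\widetilde{I}$ and quotient $\mathcal{O}_X/\mathcal{O}_X(-rE)$. The quotient is itself filtered by $\mathcal{O}_X(-jE)|_E\cong\mathcal{O}_{\mathbb{P}^1_k}(j)$ for $0\le j\le r-1$ (using $E^2=-1$), so by $H^0(\mathbb{P}^1_k,\mathcal{O}(j))=j+1$ and $H^1(\mathbb{P}^1_k,\mathcal{O}(j))=0$ its $H^0$ has $R$-length $\sum_{j=0}^{r-1}(j+1)=\binom{r+1}{2}$ and its $H^1$ vanishes; the subsheaf $\mathcal{O}_X(-rE)\otimes\mathcal{O}_X/\widetilde{I}$ is supported at the base-point transforms $R'$ with stalks $R'/I^{R'}$, so its $H^0$ has $R$-length $\sum_{R'}\lambda_R(R'/I^{R'})=\sum_{R'}[R':R]\lambda(R'/I^{R'})$ and its $H^1$ vanishes (zero-dimensional support). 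With the relevant $H^1$'s gone, $H^0$ is additive along the short exact sequence, and this is exactly the recursion. The main obstacle is the recursion, and within it the only step that genuinely uses the hypothesis "$I$ integrally closed'' is the identity $\pi_*(I\mathcal{O}_X)=I$; once that and the geometry of $E\cong\mathbb{P}^1_k$ (with $E^2=-1$) are in hand, the cohomological additivity, the $\binom{r+1}{2}$-computation, and the combinatorial bookkeeping are routine.
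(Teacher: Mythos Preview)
The paper does not prove Theorem~\ref{HD} at all: it is quoted from \cite[Theorem~3.10]{JV} (where it is attributed to Hoskin and Deligne) and used as a black box in the proof of Theorem~\ref{dim2}. So there is no ``paper's own proof'' to compare against.

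That said, your argument is correct and is essentially the standard modern proof (in the spirit of Lipman). The two ingredients you isolate are exactly the right ones: (i) the recursion $\lambda(R/I)=\binom{r+1}{2}+\sum_{R'}[R':R]\,\lambda(R'/I^{R'})$ over first quadratic transforms, and (ii) the induction on the number of base points, which is straightforward once (i) is known and once one invokes Zariski's theorem that transforms of integrally closed ideals remain integrally closed. Your cohomological derivation of (i) is clean: the vanishing $R^1\pi_*\mathcal{O}_X=0$ and the identification $\pi_*(I\mathcal{O}_X)=I$ (the latter being precisely where integral closedness enters) give $\lambda(R/I)=\lambda\bigl(H^0(X,\mathcal{O}_X/I\mathcal{O}_X)\bigr)$, and the two-step filtration with graded pieces $\mathcal{O}_{\PP^1_k}(j)$, $0\le j\le r-1$, and $\mathcal{O}_X(-rE)\otimes(\mathcal{O}_X/\widetilde{I})$ yields the two summands. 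The only place to be slightly more careful is the sentence deducing $R^1\pi_*(I\mathcal{O}_X)=0$: you should phrase it via the long exact sequence for $0\to K\to\mathcal{O}_X^{\oplus s}\to I\mathcal{O}_X\to 0$, using $R^1\pi_*\mathcal{O}_X=0$ and $R^2\pi_*K=0$ (fibre dimension $\le 1$), rather than ``pushing forward a surjection''. Otherwise the proposal is sound.
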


We also need a formula of Lipman (\cite[Lemma 2.2]{Lipman}). This first requires a definition.
If $R\leq R'$ and $R\ne R'$, then $R'$ is said to be \it proximate \rm to $R$ if the valuation ring $V$ of the
order valuation of $R$ contains $R'$. We write $R'\succ R$ in this case. 

\begin{thm}\label{Proximity} Let $(R,\m)$ be a two-dimensional regular local ring, and let $I$
be an $\m$-primary integrally closed ideal. Set $r(I)$ equal to the largest integer $r$ such that
$I = \m^rJ$ for some ideal $J$ (allowing $J = R$). Then
$$r(I) = \ord_R(I) - \sum_{R' \succ R}\ord(I^{R'})[R':R].$$
\end{thm}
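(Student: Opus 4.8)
The plan is to pass to a resolution of $I$ and to identify both sides of the asserted equality as intersection numbers on it.

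First I would reduce to the case $r(I)=0$. Put $r=r(I)$ and $J=I:\m^{r}$; then $I=\m^{r}J$, the ideal $J$ is again integrally closed (a colon of an integrally closed ideal is integrally closed), and $r(J)=0$. Because $\m\calO_{X_{1}}$ is already invertible on the blow-up $X_{1}$ of $\spec R$ at its closed point, the transform $\m^{R'}$ is the unit ideal for every two-dimensional regular local ring $R'$ with $R\le R'$ and $R\ne R'$; hence $\m^{r}R'$ is principal for all such $R'$, so $I^{R'}=J^{R'}$ whenever $R'\succ R$, while $\ord_{R}(I)=r+\ord_{R}(J)$ since $\ord_{R}$ is a valuation. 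Substituting, the identity for $I$ becomes equivalent to $\ord_{R}(J)=\sum_{R'\succ R}\ord_{R'}(J^{R'})[R':R]$, i.e.\ to the case $r=0$; so from now on assume $r(I)=0$.

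Next, let $\pi\colon Y\to\spec R$ be the resolution of $I$ obtained by successively blowing up the (finite, as $I$ is $\m$-primary) constellation $R=R_{0},R_{1},\dots,R_{n}$ of infinitely near points of $R$ at which the transform of $I$ is not the unit ideal. For a complete $\m$-primary ideal this makes $I\calO_{Y}$ invertible: $I\calO_{Y}=\calO_{Y}(-D)$ with $D=\sum_{i}v_{R_{i}}(I)\,E_{i}$ effective and antinef, where $E_{i}\cong\PP^{1}$ is the strict transform on $Y$ of the exceptional curve created when $R_{i}$ is blown up, $v_{R_{i}}$ is the order valuation of $R_{i}$, and I write $E_{0}$ for the curve attached to $\ord_{R}$. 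I would then invoke three facts from the Zariski--Lipman theory of complete ideals in two-dimensional regular local rings. (a) The matrix of self- and mutual intersection numbers of the $E_{i}$ equals $-P^{T}P$, where $P$ is the proximity matrix of the constellation: $P_{ii}=1$, $P_{ij}=-[R_{i}:R_{j}]$ if $R_{i}\succ R_{j}$, and $P_{ij}=0$ otherwise. (b) The point basis $\underline m(I)=(\ord_{R_{i}}(I^{R_{i}}))_{i}$ and the value vector $\underline v(I)=(v_{R_{i}}(I))_{i}$ are related by $\underline m(I)=P\,\underline v(I)$. (c) Through the dictionary between $\m$-primary integrally closed ideals and antinef exceptional divisors one has $r(I)=-D\cdot E_{0}$.

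Granting (a)--(c) the computation is a one-liner: $-D\cdot E_{0}=\underline v(I)^{T}(P^{T}P)\,e_{0}=(P\,\underline v(I))^{T}(P e_{0})=\underline m(I)^{T}(P e_{0})$, and since the zeroth column of $P$ is $e_{0}-\sum_{R'\succ R}[R':R]\,e_{R'}$, this equals $\ord_{R}(I)-\sum_{R'\succ R}[R':R]\,\ord_{R'}(I^{R'})$; together with (c) this is precisely the claimed formula in the case $r(I)=0$, hence in general. I expect the real work to lie in justifying (a)--(c) with the correct residue-field-degree bookkeeping over a possibly non-algebraically closed base: that exceptional curves attached to $R_{i}\succ R_{j}$ meet transversally in a single closed point of length $[R_{i}:R_{j}]$ and get \emph{separated} when a common later infinitely near point is blown up; that the antinef-divisor/complete-ideal correspondence and the identity $r(I)=-D\cdot E_{0}$ hold verbatim here; and that the normalized blow-up of a complete $\m$-primary ideal really is the blow-up of its constellation. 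An alternative that avoids global surface theory is an induction on the size of the constellation — peel off the first blow-up and apply the inductive hypothesis at each first-neighborhood point $R_{j}$ — but there the same combinatorics reappears as the need to track which infinitely near points above $R_{j}$ remain proximate to $R$ itself.
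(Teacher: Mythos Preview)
The paper does not prove this statement; it is quoted verbatim as Lipman's result \cite[Lemma~2.2]{Lipman} and used as a black box. So there is no in-paper proof to compare against, and your task was really to supply a proof where the authors chose to cite one.

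Your sketch is essentially the intersection-theoretic argument that underlies Lipman's own proof. The computation $-D\cdot E_{0}=\underline m(I)^{T}(Pe_{0})$ via the proximity matrix is exactly the mechanism behind the proximity equality, and your identification of the ``real work'' (the intersection form $-P^{T}P$, the relation $\underline m=P\underline v$, and the fact that $-D\cdot E_{0}$ is the exponent of $\m$ in the Zariski factorization of $I$) is accurate; these are the standard ingredients, developed with the correct residue-degree weights in Lipman's paper and its predecessors. One minor redundancy: your preliminary reduction to $r(I)=0$ is unnecessary, since fact (c) as you state it already gives $r(I)=-D\cdot E_{0}$ for arbitrary $I$, and the linear-algebra computation that follows is valid without any hypothesis on $r(I)$. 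The reduction is harmless, but you can delete it and the argument still goes through unchanged.
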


Putting these results together gives us a fairly sharp upper bound for $\eh(I)$ in the case that
$I$ is an 
$\m$-primary ideal in a two-dimensional regular local ring.  Recall that if $I$ is an ideal, then $\overline{I}$  denotes the integral closure
of $I$.

\begin{thm}\label{dim2}
Let $(R,\m)$ be a two-dimensional regular local ring and $I$ be an $\m$-primary ideal. Then 
$$\eh(I) \leq 2\cdot \len(R/I) - 2\cdot\ord_R(I) + r(\overline{I}).$$ 
\end{thm}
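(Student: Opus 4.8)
The plan is to pass to the integral closure and then combine the three classical formulas that have just been recalled. Since $\ord_R(I)=\ord_R(\overline I)$, $\len(R/I)\ge\len(R/\overline I)$, and $\eh(I)=\eh(\overline I)$, it suffices to prove the inequality for an integrally closed $\m$-primary ideal; so assume $I$ is integrally closed from now on, and write $r=r(I)$, so that $I=\m^r J$ for some integrally closed ideal $J$ with $r(J)=0$.

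First I would record what the Hoskin--Deligne formula (Theorem~\ref{HD}) and the Multiplicity Formula (Theorem~\ref{multiplicityFormula}) say about the difference $2\len(R/I)-\eh(I)$. Writing $a_{R'}=\ord(I^{R'})$ and $b_{R'}=[R':R]$, summed over all two-dimensional regular local rings $R'$ birationally dominating $R$, we have
\[
2\len(R/I)-\eh(I)=\sum_{R\le R'}\bigl(2\tbinom{a_{R'}+1}{2}-a_{R'}^2\bigr)b_{R'}=\sum_{R\le R'}a_{R'}\,b_{R'}.
\]
So the target inequality becomes
\[
\sum_{R\le R'}a_{R'}\,b_{R'}\ \ge\ 2\,\ord_R(I)-r(\overline I)=2a_R-r,
\]
where the term $R'=R$ on the left contributes exactly $a_R$ (with $b_R=1$). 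Hence it is enough to show that the contribution of the proper dominations is at least $a_R-r$, i.e.
\[
\sum_{R<R'}a_{R'}\,b_{R'}\ \ge\ a_R-r .
\]

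This is where Lipman's proximity formula (Theorem~\ref{Proximity}) does the work: it gives $r=a_R-\sum_{R'\succ R}a_{R'}b_{R'}$, that is, $a_R-r=\sum_{R'\succ R}a_{R'}b_{R'}$. Since every $R'$ proximate to $R$ in particular birationally dominates $R$ and is distinct from $R$, the sum $\sum_{R'\succ R}a_{R'}b_{R'}$ is a subsum of $\sum_{R<R'}a_{R'}b_{R'}$ (all terms being non-negative, as orders and field-extension degrees are non-negative integers). Therefore $\sum_{R<R'}a_{R'}b_{R'}\ge\sum_{R'\succ R}a_{R'}b_{R'}=a_R-r$, which is exactly what was needed; adding back the $R'=R$ term and rearranging yields $\eh(I)\le 2\len(R/I)-2\ord_R(I)+r(\overline I)$.

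The only genuine subtlety is the interplay between the three formulas: they must all be applied to the \emph{same} integrally closed ideal and with \emph{compatible} normalizations of $\ord(I^{R'})$ and $[R':R]$, and one must be sure that the index sets in all three sums agree (the "base points" of $I$, a finite set) so that the term-by-term manipulation above is legitimate; Theorems~\ref{multiplicityFormula}, \ref{HD}, and \ref{Proximity} as stated are set up precisely so that this matches. A secondary point to verify is the reduction step, namely that replacing $I$ by $\overline I$ does not increase the right-hand side — this is immediate since $\ord_R$ and $r(\overline{\cdot})$ are unchanged while $\len(R/\cdot)$ only drops — so no hypothesis of integral closedness is lost in the final statement. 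Beyond that, the argument is a short non-negative rearrangement and there is no real obstacle.
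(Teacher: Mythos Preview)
Your proof is correct and follows essentially the same route as the paper's own argument: reduce to the integrally closed case, subtract the Multiplicity Formula from twice the Hoskin--Deligne formula to obtain $2\len(R/I)-\eh(I)=\sum_{R\le R'}\ord(I^{R'})[R':R]$, then bound this sum below by the $R'=R$ term plus the sum over proximates, and finish with Lipman's proximity identity. The only cosmetic difference is that you isolate the $R'=R$ term explicitly before invoking proximity, whereas the paper writes the lower bound in one line; the factorization $I=\m^r J$ you introduce is not actually used and can be dropped.
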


\begin{proof}
We first observe that we may assume $I$ is integrally closed. This is due to the fact that the left-hand side of the inequality does not change, while
in the right-hand side the length can only decrease, and the order stays the same. Using the Hoskin-Deligne formula and the multiplicity formula, we see that
$$\len(R/I) = \sum_{R \leq R'}\frac{(\ord(I^{R'})+1)\ord(I^{R'})}{2}[R':R] = \frac{\eh(I)}{2} + \frac{1}{2}\sum_{R \leq R'}\ord(I^{R'})[R':R].$$
Hence 
$$
2\cdot\len(R/I) = \eh(I)+\sum_{R \leq R'}\ord(I^{R'})[R':R]\geq \eh(I)+\ord_R(I) +\sum_{R' \succ R}\ord(I^{R'})[R':R],
$$
and therefore by Theorem \ref{Proximity},
$$2\cdot\len(R/I) \geq  \eh(I) + 2\cdot\ord_R(I) - r(I),$$
as claimed.  \end{proof}

\begin{cor}\label{2.5dim2}
Conjecture \ref{length conj} holds in dimension two.
Namely, if $I$ is an $\m$-primary ideal in a two-dimensional Noetherian  local ring $(R,\m)$, 
then 
\begin{equation}\label{2.5dim2ineq}
\eh(I) + \eh_1 (\m \mid I)\leq 2\len(R/I)\eh(R).
\end{equation}
Moreover, if $R$ is regular, then equality holds if and only if  $I$ is a power of the maximal ideal.  
\end{cor}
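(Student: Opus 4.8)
The plan is to deduce Corollary~\ref{2.5dim2} from Theorem~\ref{dim2} together with the reduction to the regular case (Theorem~\ref{reduction}), and then to analyze the equality case directly. First I would invoke Theorem~\ref{reduction} with $d=2$ and the coefficients $(n_0,n_1,n_2)=(s_0,s_1,0)=(1,1,0)$, so that it suffices to prove the inequality $\eh(I)+\eh_1(\m\mid I)\le 2\len(R/I)$ for an integrally closed $\m$-primary ideal $I$ in a complete two-dimensional regular local ring $(R,\m)$ with infinite residue field. (Here I am using that $P_2(n)=n(n+1)=n^2+n$, so $s_0=s_1=1$, and that $\eh(R)=1$ in the regular case.)

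Next, in the regular case I would identify the left-hand side concretely. By Remark~\ref{mod form}, $\eh_1(\m\mid I)=\eh(I_1)$ where $I_1$ is the image of $I$ in $R_1=R/(x)$ for a general $x\in\m$; since $R_1$ is a one-dimensional regular local ring (a DVR) and $I_1$ is $\m_1$-primary, $\eh(I_1)=\ord_{R_1}(I_1)=\ord_R(I)$, the last equality because $x$ is general (so the leading form of $x$ does not divide the generators of $I^*$, equivalently $\ord$ is preserved). Thus $\eh_1(\m\mid I)=\ord_R(I)$, and the claimed inequality becomes
\[
\eh(I)+\ord_R(I)\le 2\len(R/I).
\]
But Theorem~\ref{dim2} gives $\eh(I)\le 2\len(R/I)-2\ord_R(I)+r(\overline I)$, i.e. $\eh(I)+2\ord_R(I)-r(\overline I)\le 2\len(R/I)$, and since $I$ is integrally closed we have $\overline I=I$ and $r(I)\le\ord_R(I)$ (as $I=\m^{r(I)}J$ forces $\ord_R(I)\ge r(I)$). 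Hence $\eh(I)+\ord_R(I)\le \eh(I)+2\ord_R(I)-r(I)\le 2\len(R/I)$, which is exactly what we need. This proves \eqref{2.5dim2ineq} in general.

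For the equality statement, assume $R$ is regular (so $\eh(R)=1$) and $I$ is $\m$-primary with $\eh(I)+\eh_1(\m\mid I)=2\len(R/I)$. Replacing $I$ by $\overline I$ changes neither side unless it strictly decreases the length, so equality forces $I=\overline I$ integrally closed; then tracing back through the chain of inequalities above, equality in $\eh(I)+\ord_R(I)\le\eh(I)+2\ord_R(I)-r(I)$ forces $r(I)=\ord_R(I)$, and equality in Theorem~\ref{dim2} forces, by its proof, that $\sum_{R\le R'}\ord(I^{R'})[R':R]=\ord_R(I)+\sum_{R'\succ R}\ord(I^{R'})[R':R]$, i.e. the only contributions come from $R$ itself and its proximate neighbors. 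Combined with $r(I)=\ord_R(I)$ via the proximity formula (Theorem~\ref{Proximity}), this gives $\sum_{R'\succ R}\ord(I^{R'})[R':R]=0$, so $\ord(I^{R'})=0$ for all $R'\neq R$; hence the only point blown up is the closed point of $R$, and $I$ is a power of $\m$. I expect the main obstacle to be this last step: extracting from equality in Theorem~\ref{dim2} the vanishing of all the higher-order terms $\ord(I^{R'})$ and cleanly concluding $I=\m^n$ — one must be careful that "equality forces $r(I)=\ord_R(I)$" together with the Hoskin–Deligne bookkeeping really does kill every base point except the origin, and that conversely $I=\m^n$ does give equality (which is the easy check $\eh(\m^n)+\ord(\m^n)=n^2+n=2\binom{n+1}{2}=2\len(R/\m^n)$).
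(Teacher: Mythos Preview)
Your proposal is correct and follows essentially the same route as the paper: reduce to the regular integrally closed case via Theorem~\ref{reduction}, identify $\eh_1(\m\mid I)=\ord_R(I)$ using Remark~\ref{mod form} and the fact that $R_1$ is a DVR, and then feed in Theorem~\ref{dim2} together with $r(I)\le\ord_R(I)$.

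The only comment is that you are working much harder than necessary in the equality analysis. Once you have established that equality forces $I=\overline I$ and $r(I)=\ord_R(I)$, you are already done: by definition $I=\m^{r(I)}J$ for some ideal $J$, so $\ord_R(I)=r(I)+\ord_R(J)$, whence $\ord_R(J)=0$, i.e.\ $J\not\subseteq\m$, and therefore $J=R$ and $I=\m^{r(I)}$. There is no need to invoke equality in Theorem~\ref{dim2}, the proximity formula, or the Hoskin--Deligne bookkeeping to kill the base points $R'\neq R$. The paper's proof makes exactly this immediate deduction, so what you flagged as ``the main obstacle'' is in fact a one-line consequence of the definition of $r(I)$.
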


\begin{proof} By Theorem \ref{reduction} we may assume that $R$ is a two-dimensional  regular local ring with infinite residue field and $I$ is integrally closed.  Using  Remark \ref{mod form} we observe that  $\eh_1 (\m \mid I)=\eh (I_1)=\ord_R(I) \geq r(I)$ since $R_1$ is a DVR.
Therefore,  (\ref{2.5dim2ineq}) follows immediately from Theorem \ref{dim2}. Furthermore,  since 
$$
\eh(I) + \eh_1 (\m \mid I) =\eh(\bar{I}) + \eh_1 (\m \mid \bar{I}) \leq 2\len(R/\bar{I})\eh(R) \leq 2\len(R/I)\eh(R),
$$
 equality in (\ref{2.5dim2ineq}) implies $\bar{I}=I$. Therefore, if $R$ is regular and  equality holds in (\ref{2.5dim2ineq}), then  $r(I)= \ord(I)$ by Theorem \ref{dim2}, hence $I$ is a power of the maximal ideal.
\end{proof}

\medskip
\section{Regular Local Rings of Dimension Three}\label{Dimen 3}

\medskip


In order to prove our main theorem improving Lech's inequality in dimension four or higher, we need 
an improvement in dimension three which is as precise as possible. A fairly tight formula as was given
in the last section for two-dimensional regular local rings is probably impossible to attain in dimensions
at least three. We need an improvement which is not so precise that we cannot prove it, but is robust
enough to allow reductive steps in dimension four to go through. This means the inequality is
a little delicate.

Let $(R,\m)$ be a regular local ring with infinite residue field. 
Let $x$ denote a general element of $\m$, i.e., a general linear combination of fixed generators of $\m$. 
As in the preliminaries, by $R_1$ we denote $R/(x)$, and by $I_1$ we denote the image of an ideal $I$ in $R_1$. Similarly,
by $R_2$ we denote the ring obtained from $R$ by moding out the ideal generated by two general linear elements, and let $I_2$ be the image of $I$ in $R_2$. 


The main result in this section is the following more precise estimate giving an inequality
between various multiplicities and colength.

\begin{thm}\label{dim3thm} 
Let $(R, \m)$ be a three-dimensional regular local ring with infinite residue field and 
$I$ be an  $\m$-primary ideal. Then for all $c \in [0, 2]$ we have 
\[
\eh(I) + 3\eh (I_1) + (2 + c) \eh(I_2) - c\rd(\overline{I_1})\leq 6 \len (R/I).
\]
\end{thm}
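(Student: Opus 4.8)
The plan is to run an induction-by-cutting-with-a-general-element argument, using the two-dimensional result (Theorem~\ref{dim2}) as the base of the descent and Lemma~\ref{colonmult} to control how length and multiplicity split when we pass to $R_1 = R/(x)$. First I would reduce to the case that $I$ is integrally closed: the left-hand side only depends on $\overline{I}$ for the multiplicity terms $\eh(I),\eh(I_1),\eh(I_2)$ (since $\eh(\overline J)=\eh(J)$ and passing to a general hyperplane commutes with integral closure up to the multiplicities we need, via Remark~\ref{mixedmultASmult}), the term $-c\,\rd(\overline{I_1})$ is literally unchanged, and $\len(R/I)\ge\len(R/\overline I)$, so it suffices to prove the inequality for $\overline I$. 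So assume $I=\overline I$ is $\m$-primary and integrally closed in the three-dimensional regular local ring $R$ with infinite residue field.

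Next I would pick a general element $x\in\m$ and set $J=I:x$, $R_1=R/(x)$. By Theorem~\ref{properties}(1), $J=I:\m$, and $x$ is a general hyperplane so $R_1$ is a two-dimensional regular local ring with infinite residue field and $I_1=IR_1$ is integrally closed there. Lemma~\ref{colonmult} gives
\[
\len(R/I)=\len(R/J)+\len(R_1/I_1),\qquad \eh(I)\le\eh(J)+3\,\eh(I_1).
\]
Now $\eh(I_1)$, $\eh((I_1)_1)=\eh(I_2)$, and $\rd(\overline{I_1})=\rd(I_1)$ are exactly the two-dimensional data for $I_1$ in $R_1$, so Theorem~\ref{dim2} (applied to $I_1\subseteq R_1$, using that $(I_1)_1$ is a DVR-image so $\eh((I_1)_1)=\ord(I_1)$, as in the proof of Corollary~\ref{2.5dim2}) yields a bound of the shape
\[
\eh(I_1)+2\,\eh(I_2)+\bigl(\text{lower-order correction involving }\rd(I_1)\bigr)\le 2\,\len(R_1/I_1).
\]
The idea is then to combine $\eh(I)\le\eh(J)+3\eh(I_1)$ with $c\ge 0$ times this two-dimensional estimate so that the $\eh(I_1),\eh(I_2),\rd(\overline{I_1})$ terms on the left assemble correctly, reducing everything to showing
\[
\eh(J)\le 6\,\len(R/J)
\]
plus slack, which is just Lech's inequality in the three-dimensional regular ring $R$ applied to the $\m$-primary ideal $J$ (here $\eh(R)=1$, $d!=6$). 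The constraint $c\in[0,2]$ should be exactly what makes the bookkeeping of coefficients nonnegative: $c$ multiplies the correction term $\eh(I_2)$ coming out of Theorem~\ref{dim2}, and $c\le 2$ keeps the coefficient of $\eh(I_2)$ from overshooting what the three-dimensional count can afford, while the Proximity-type term $r(\overline{I_1})$ enters with the right sign.

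The main obstacle I anticipate is that the naive combination above is \emph{not} tight enough: simply adding Lech's bound for $J$ to $c$ times Theorem~\ref{dim2} for $I_1$ will in general lose a positive multiple of $\eh(I_1)$ (equivalently of $\ord_R(I)$), and recovering that loss is precisely where the argument becomes delicate, exactly as the paragraph before the theorem warns. To handle this I would not use Lech's inequality for $J$ as a black box but instead re-examine the colength splitting more carefully — tracking $\len(R/J)$ versus $\len(R_1/I_1)$ and the relation $\m I\subseteq J$ (so $\len(R/J)\le\len(R/\m I)$), and using that $x$ general forces $\ord$ and the proximity data of $I$ and $I_1$ to match — so that the $\eh(I_1)$-deficit is absorbed by the gap between $\len(R/I)$ and $\len(R/J)+\len(R_1/I_1)$ being sharp. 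In effect one wants an inequality for $J$ in $R$ that is itself slightly better than Lech's whenever $\ord_R(I)$ is large, and proving that refined three-dimensional statement (rather than the bare $\eh(J)\le 6\len(R/J)$) is the real content; the endpoint cases $c=0$ (which is just the dimension-three instance of Conjecture~\ref{length conj}-type bound) and $c=2$ should be checked separately as sanity checks, with general $c\in[0,2]$ following by convexity of the inequality in $c$ since it is affine in $c$.
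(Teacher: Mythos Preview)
Your overall framework---reduce to $\overline I$, cut by a general $x\in\m$, invoke Theorem~\ref{dim2} on $I_1$, and note that the inequality is affine in $c$ with nonnegative $c$-coefficient (since $\eh(I_2)=\ord(I_1)\ge \rd(\overline{I_1})$), so only $c=2$ needs proof---matches the paper. But the step where you bound $\eh(I)$ is exactly where your acknowledged gap lives, and your proposed repair does not close it. You use Lemma~\ref{colonmult} to write $\eh(I)\le\eh(J)+3\eh(I_1)$ and then plan to control $\eh(J)$ by plain Lech, $\eh(J)\le 6\len(R/J)$. As you correctly compute, this loses a term of order $3\eh(I_1)$, and the fixes you suggest do not recover it: the containment you write, ``$\m I\subseteq J$'', is the trivial one ($\m I\subseteq I\subseteq J$) and gives nothing; the useful containment is the reverse one, $\m J\subseteq I$, coming from $J=I:x=I:\m$.

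The paper closes the gap with two moves you are missing. First, from $\m J\subseteq I$ one gets $\eh(I)\le\eh(\m J)=\eh(J)+3\eh(J_1)+3\eh(J_2)+1$, so the correction terms are in $J$-data rather than $I$-data. Second---and this is the ``refined three-dimensional statement for $J$'' you were groping for---the proof is an induction on $\len(R/I)$, and one applies the \emph{full theorem at $c=2$} to $J$ (which has strictly smaller colength), not just Lech. That gives
\[
\eh(I)\le 6\len(R/J)-\eh(J_2)+2\rd(\overline{J_1})+1,
\]
and combining with Theorem~\ref{dim2} for $I_1$ reduces the claim to an elementary inequality among $\ord(I_1),\ord(J_1),\rd(\overline{I_1}),\rd(\overline{J_1})$; a short separate argument handles the degenerate case $\overline{I_1}=\overline{J_1}=\m_1^r$. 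One minor side remark: $I_1$ need not be integrally closed even when $I$ is, which is why the statement carries $\rd(\overline{I_1})$ rather than $\rd(I_1)$.
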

\begin{proof} 
We can rewrite the assertion as 
\[
\eh(I) + 3\eh (I_1) + 2 \eh(I_2) + c(\eh(I_2) - \rd(\overline{I_1}) )\leq 6 \len (R/I).
\]
Since $R_2$ is a DVR, we have $\eh(I_2) =  \ord (I_2)= \ord (I_1)$. Also note that  $\ord (I_1) = \ord (\overline{I_1})$, since  $R_1$ is a regular local ring and powers of $\m_1$ are integrally closed. Thus  $\eh(I_2) =  \ord (\overline{I_1}) \geq \rd(\overline{I_1})$. Therefore, it is enough to prove the inequality for $c = 2$.

We use induction on $\len(R/I)$, the base case of $I = \m$ is clear as both sides equal $6$.
We may assume that $I$ is integrally closed. Choose a general linear form $x \in \m$ and let $J = I :x$.
By Theorem~\ref{properties} $I : x = I :\m$, so $\m J \subseteq I$.
Therefore, by the expansion formula (\ref{expansion}) for  $\eh(\m J)$, 
\[
\eh(I) \leq \eh(\m J) =\eh(J) + 3 \eh(J_1) + 3 \eh(J_2) + 1,
\] 
and after applying the induction hypothesis to $J$ we refine this to
\begin{equation}\label{EQ1}
\eh(I) \leq 6 \len (R/J) - \eh(J_2) + 2\rd(\overline{J_1})+ 1.
\end{equation}

By Theorem~\ref{dim2} $\eh(I_1) \leq 2\len (R_1/I_1) - 2\eh(I_2) + \rd(\overline{I_1})$.
Thus we derive that
$$
\eh(I) + 3\eh(I_1) + 4\eh(I_2) - 2\rd(\overline{I_1}) 
\leq \eh(I) + 6\len (R_1/I_1) -2 \eh(I_2) + \rd(\overline{I_1}).
$$
After adding the estimate of $\eh(I)$ from (\ref{EQ1}) and using Lemma~\ref{colonmult}, we obtain that
$$
\eh(I) + 3\eh(I_1) + 4\eh(I_2) - 2\rd(\overline{I_1}) 
\leq 
6\len(R/I)  - \eh(J_2) + 2\rd(\overline{J_1})+ 1 -2 \eh(I_2) + \rd(\overline{I_1}).
$$
Thus the assertion of the theorem follows when we can show that
\[
 - \eh(J_2) + 2\rd(\overline{J_1})+ 1 - 2 \eh(I_2) + \rd(\overline{I_1}) \leq 0,
\]
or since $R_2$ is a DVR, 
\[
- \ord(J_1) + 2\rd(\overline{J_1})+ 1 - 2 \ord(I_1) + \rd(\overline{I_1}) \leq 0.
\]

We regroup the terms in this inequality and we write it as 
\begin{equation}\label{EQ2}
 \left(\ord(I_1)- \ord(J_1) \right) + \left( \ord (I_1)  - \rd(\overline{I_1})\right) + 2\left(\ord(J_1)  - \rd(\overline{J_1})\right) \geq 1.
\end{equation}
Since $ \ord(I_1) \geq \rd(\overline{I_1})$  and $\ord(I_1) \geq \ord(J_1) \geq \rd(\overline{J_1})$,
Inequality \ref{EQ2} certainly holds unless 
\[ \ord(I_1) = \rd(\overline{I_1}) = \ord(J_1) = \rd(\overline{J_1}).\]
Then  $\overline{I_1} = \overline{J_1}= \m_1^r$ for some $r$.
Thus $\eh(I_1) = \eh(J_1)$ and $\eh(I_2) = \eh(J_2)$. Therefore, 
\[
\eh(I) + 3\eh(I_1) + 4 \eh  (I_2) - 2\rd(\overline{I_1}) 
= \eh(I) + 3\eh(I_1) + 2\eh (I_2).
\]
By Lemma~\ref{colonmult} and the induction hypothesis,
\[
 \eh(I) + 3\eh(I_1) + 2\eh (I_2)
\leq \eh(J) + 6\eh(J_1) + 2\eh(J_2) \leq  6\len (R/J)+ 3\eh(J_1).
\]
However, $3\eh(J_1)= 3\eh(I_1) < 6 \len (R_1/I_1)$ by Lech's Inequality in dimension 2. Therefore,  
$$ 6\len (R/J) + 3\eh(J_1) <  6\len (R/J) + 6 \len (R_1/I_1)= 6 \len(R/I)$$
by Lemma~\ref{colonmult} and the theorem follows. 
\end{proof}

\begin{cor}\label{2.5dim3}
 Conjecture~\ref{length conj} holds in dimension three. That is, if $I$ is an $\m$-primary ideal in a Noetherian  local ring $(R,\m)$ of dimension three, 
then 
\begin{equation}\label{2.5dim3ineq}
\eh(I) + 3 \eh_1 (\m \mid I) +  2\eh_2 (\m \mid I) \leq 6\len(R/I)\eh(R).
\end{equation}
Moreover, if $R$ is regular, then equality holds if and only if  $I$ is a power of the maximal ideal.  
\end{cor}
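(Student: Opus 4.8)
The plan is to mirror the two-dimensional argument in Corollary \ref{2.5dim2}: first reduce to the regular case via Theorem \ref{reduction}, then extract inequality \eqref{2.5dim3ineq} from the sharp estimate of Theorem \ref{dim3thm}, and finally analyze the equality case. For the reduction, Conjecture \ref{length conj} in dimension three asks precisely for $\sum_{i=0}^{2} s_i \eh_i(\m \mid I) \le 3! \len(R/I)\eh(R)$ with $(s_0,s_1,s_2) = (1,3,2)$, which is the shape covered by Theorem \ref{reduction} with $(n_0,n_1,n_2,n_3) = (1,3,2,0)$. So I would first invoke Theorem \ref{reduction} to assume $R$ is a complete three-dimensional regular local ring with infinite residue field, and (by the last paragraph of that proof) that $I$ is integrally closed. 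In that setting $\eh(R) = 1$, and by Remark \ref{mod form} we have $\eh_1(\m \mid I) = \eh(I_1)$ and $\eh_2(\m \mid I) = \eh(I_2)$, since general elements of $\m$ are superficial. Thus \eqref{2.5dim3ineq} becomes $\eh(I) + 3\eh(I_1) + 2\eh(I_2) \le 6\len(R/I)$, which is exactly Theorem \ref{dim3thm} taken at $c = 0$. That establishes the inequality in general.

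For the equality statement, I would argue as in Corollary \ref{2.5dim2}. Since passing from $I$ to its integral closure $\overline{I}$ leaves $\eh(I)$, $\eh_1(\m\mid I)$ and $\eh_2(\m\mid I)$ unchanged (integral closure does not change multiplicities, nor the mixed multiplicities $\eh_i(\m \mid I) = \eh(I_i)$, as $\overline{I}_i = \overline{I_i}$ up to integral closure which has the same multiplicity) while $\len(R/\overline{I}) \le \len(R/I)$, equality in \eqref{2.5dim3ineq} forces $I = \overline{I}$. So assume $R$ is regular, $I$ is integrally closed, and equality holds. Now I would trace back through the proof of Theorem \ref{dim3thm} at $c = 2$ (equivalently, track where slack was introduced): equality throughout requires equality in $\eh(I) \le \eh(\m J)$ with $J = I : \m$, equality in the application of Theorem \ref{dim2} to $I_1$, and equality \eqref{EQ2} holding with value exactly $1$. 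Since the three nonnegative summands in \eqref{EQ2} are $\ord(I_1)-\ord(J_1)$, $\ord(I_1)-\rd(\overline{I_1})$, and $2(\ord(J_1)-\rd(\overline{J_1}))$, and the last is even, forcing the sum to be $1$ forces $\ord(J_1) = \rd(\overline{J_1})$, $\ord(I_1) = \ord(J_1)$, and $\ord(I_1) - \rd(\overline{I_1}) = 1$. But the equality analysis in the dimension-two step (Corollary \ref{2.5dim2}, last line) gives $\rd(\overline{I_1}) = \ord(I_1)$ when Theorem \ref{dim2} is an equality for $I_1$; this contradicts $\ord(I_1) - \rd(\overline{I_1}) = 1$ unless we are in the degenerate branch $\len(R/I) = \len(R/\m)$ handled by the base case. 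Hence the induction bottoms out only at $I = \m$, and by induction $I$ must be a power of $\m$; conversely equality for powers of $\m$ in a regular ring is the computation recorded after Question \ref{conj}.

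The main obstacle I anticipate is the equality analysis rather than the inequality. The inequality is essentially a one-line consequence of Theorem \ref{dim3thm} at $c=0$ once the reduction and Remark \ref{mod form} are in place. The delicate point is bookkeeping which estimates in the proof of Theorem \ref{dim3thm} are tight simultaneously: the proof there uses $c = 2$ and repeatedly discards terms (for instance $\eh(I_2) - \rd(\overline{I_1}) \ge 0$, and the strict inequality from Lech in dimension two, $3\eh(J_1) < 6\len(R_1/I_1)$, in the final case), so one must confirm that forcing equality in \eqref{2.5dim3ineq} (the $c=0$ form) propagates back to force equality in all these places — this needs a small amount of care because \eqref{2.5dim3ineq} is weaker than the $c=2$ statement. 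I expect this to go through cleanly because the gap between the $c=0$ and $c=2$ versions is exactly $2(\eh(I_2) - \rd(\overline{I_1})) \ge 0$, which vanishes precisely when $\overline{I_1}$ is a power of $\m_1$, and that is already the situation the equality discussion drives toward.
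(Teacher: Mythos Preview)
Your derivation of the inequality is correct and matches the paper exactly: reduce via Theorem~\ref{reduction}, identify the mixed multiplicities with $\eh(I_i)$ via Remark~\ref{mod form}, and read off the $c=0$ case of Theorem~\ref{dim3thm}.

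Your equality argument, however, has a genuine gap. You try to obtain a contradiction by tracing equality back through the proof of Theorem~\ref{dim3thm}, but this cannot succeed as stated: equality \emph{does} hold for every power of $\m$, so no outright contradiction is available. The paper's argument is not a contradiction argument but an induction on $\len(R/I)$ with a dichotomy. One sets $J = I:x$ and considers the two inequalities $\eh(I)\le \eh(\m J)$ and $\eh(J)+3\eh(J_1)+2\eh(J_2)\le 6\len(R/J)$. If either is strict, one combines with Corollary~\ref{2.5dim2} and Lemma~\ref{colonmult} to force $\eh(I_2)=\eh(J_2)$ and then $\overline{I_1}=\overline{J_1}=\m_1^r$; but this is precisely the second branch in the proof of Theorem~\ref{dim3thm}, where that proof gives a \emph{strict} inequality (via $3\eh(J_1)<6\len(R_1/I_1)$), contradicting equality. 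Hence both inequalities are equalities. The crucial step you are missing is then Rees's theorem: $\eh(I)=\eh(\m J)$ forces $I=\overline{\m J}$, and by the inductive hypothesis applied to $J$ (smaller colength) $J$ is a power of $\m$, whence so is $I$. Your sketch never invokes Rees's theorem, and your phrase ``the induction bottoms out only at $I=\m$'' does not explain how one passes from $I$ to $J$.

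There is also a concrete slip in your case analysis: when you force the sum in \eqref{EQ2} to equal $1$ with the last (even) summand zero, the remaining two nonnegative summands $\ord(I_1)-\ord(J_1)$ and $\ord(I_1)-\rd(\overline{I_1})$ could split as $(1,0)$ just as well as $(0,1)$; you treat only the latter. And the claim that equality in Theorem~\ref{dim2} for $I_1$ yields $\rd(\overline{I_1})=\ord(I_1)$ is not what Corollary~\ref{2.5dim2} says: that corollary analyzes equality in the \emph{weaker} inequality $\eh(I_1)+\ord(I_1)\le 2\len(R_1/I_1)$, not in Theorem~\ref{dim2} itself. (You are right, per your final paragraph, that $\ord(I_1)=\rd(\overline{I_1})$ does follow from comparing the $c=0$ and $c=2$ cases; but then the contradiction you aim for is with your own incomplete case split, not with Theorem~\ref{dim2}.)
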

\begin{proof}
By Theorem \ref{reduction} we may assume that $R$ is a three-dimensional regular local ring with infinite residue field. Then $\eh_{i}(\m \mid I) = \eh(I_i)$ for $i=0,1,2$  by Remark \ref{mod form},  and  
Inequality \ref{2.5dim3ineq} corresponds to the case $c=0$ in Theorem \ref{dim3thm}.

We use induction on $\lambda(R/I)$ to prove that equality in (\ref{2.5dim3ineq}) implies $I$ is a power of $\m$ in the regular case. The assertion is trivially true for the base case $\lambda(R/I)=1$. Suppose 
\begin{equation}\label{E1}
\eh(I) + 3 \eh_1 (\m \mid I) +  2\eh_2 (\m \mid I) = 6\len(R/I).
\end{equation}
Then $I$ is integrally closed, since
$$
\eh(I) + 3 \eh_1 (\m \mid I) +  2\eh_2 (\m \mid I) = \eh(\bar{I}) + 3 \eh_1 (\m \mid \bar{I}) + 2\eh_2 (\m \mid \bar{I}) \leq 6\len(R/\bar{I}) \leq 6\len(R/I).
$$
We may further  assume that the residue field is infinite. Let $J = I :x$ for a general linear form $x \in \m$. Therefore $\m J \subseteq I \subseteq J$ by Theorem~\ref{properties}. Using the expansion formula (\ref{expansion}) for $\eh(\m J)$,
\begin{equation}\label{E2}
\eh(I) \leq \eh(\m J)=\eh(J) + 3 \eh(J_1) + 3 \eh(J_2) + 1.
\end{equation}
In addition,  by Theorem \ref{dim3thm},
\begin{equation}\label{E3}
\eh(J) + 3\eh (J_1) + 2 \eh(J_2) \leq 6 \len (R/J).
\end{equation}
If one of the  Inequalities \ref{E2} and \ref{E3} is strict, then  adding them and subtracting 1 from the right-hand side, we obtain
\begin{equation}\label{E4}
\eh(I) \leq 6 \len (R/J)  + \eh(J_2).
\end{equation}
By Corollary \ref{2.5dim2} we have $3\eh(I_1) + 3\eh(I_2) \leq  6\len (R_1/I_1)$. Therefore, adding this  inequality to (\ref{E4}) and using Lemma~\ref{colonmult}, we derive
\begin{equation}\label{E5}
\eh(I) + 3\eh(I_1) + 3\eh(I_2) \leq 6 \len (R/J)  + \eh(J_2)  +6\len (R_1/I_1)
\leq  6\len(R/I)  + \eh(J_2).
\end{equation}
Subtracting (\ref{E1}) from Inequality \ref{E5} we obtain  $\eh(I_2) \leq \eh(J_2)$. This implies $\eh(I_2)$ and $\eh(J_2)$ are equal, since $\eh(I_2) \geq \eh(J_2)$ as $I_2\subseteq J_2$.
Therefore,  $\ord(I_1) = \eh(I_2)=\eh(J_2) = \ord(J_1)$, since $R_2$ is a DVR. On the other hand, (\ref{E1}) implies $\ord(I_1)= \rd(\overline{I_1})$ by Theorem \ref{dim3thm}. Thus $\bar{I_1}=\m_1^r$ for some $r$. Hence, $\ord(J_1)=\ord(I_1)=r$, so $I_1\subseteq J_1 \subseteq \m^r$, and we conclude $\bar{I_1}=\bar{J_1} = \m_1^r$. However, in this case  $\eh(I) + 3\eh (I_1) + 2 \eh(I_2)$ is strictly less than $6 \len (R/I)$, as it is shown in the proof of Theorem  \ref{dim3thm}, which contradicts   (\ref{E1}).
Therefore, equality must hold in both Inequalities \ref{E2} and \ref{E3}. The equality $\eh(I) = \eh(\m J)$ implies  $I=\bar{\m J}$ due to a well-known result of Rees (\cite{Rees1}, \cite[Theorem 11.3.1]{SH}).  On the other hand, equality in  (\ref{E3}) implies $J$ is a power of $\m$ by induction.  Therefore, $I$ is a power of $\m$.
\end{proof}

\medskip

\section{The main result}\label{mainresult}

In this section we are able to use the previous results to prove our
main result in Theorem \ref{main}.
An induction reduces this theorem to the case of regular local rings
of dimension four. Here the
result is subtle, and we use the somewhat strange estimates
for dimension three
regular local rings developed in Theorem
\ref{dim3thm} to finish the proof in dimension four.

\begin{thm} \label{main}
Let $(R,\m)$ be a Noetherian local ring of dimension  $d \geq 4$. Let $I$ be an
$\m$-primary ideal. Then 
\begin{equation}\label{maininequality}
\eh(\m I)\leq d!\len(R/I)\eh(R).
\end{equation}
\end{thm}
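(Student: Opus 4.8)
The plan is to reduce to a complete regular local ring and then prove the resulting inequality by induction on the dimension, with an inner induction on the colength. By the expansion formula \eqref{expansion} one has $\eh(\m I)=\sum_{i=0}^{d}\binom{d}{i}\eh_i(\m\mid I)$, so \eqref{maininequality} is exactly the instance $n_i=\binom{d}{i}$ of \eqref{formula-general}. Hence, by Theorem~\ref{reduction}, it suffices to prove $\eh(\m I)\le d!\,\len(R/I)$ for every complete regular local ring $(R,\m)$ of dimension $d\ge4$ with infinite residue field and every integrally closed $\m$-primary ideal $I$ (so that $\eh(R)=1$). I would argue by induction on $d\ge 4$ and, for each fixed $d$, by a second induction on $\len(R/I)$; the inner base case $I=\m$ is immediate, since $\eh(\m^{2})=2^{d}\eh(R)=2^{d}\le d!$ for $d\ge4$.

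For the inductive step, choose a general element $x\in\m$, set $R_1=R/(x)$ and $I_i=IR_i$, and put $J=I:x$. Since $I$ is integrally closed, Theorem~\ref{properties}(1) gives $J=I:\m$, so $\m J\subseteq I$ and hence $\eh(I)\le\eh(\m J)$. Using the standard identity $\overline{\m J}=\overline{\m\,\overline{J}}$ (see \cite{SH}) we get $\eh(\m J)=\eh(\m\,\overline{J})$, and since $J=I:\m\supsetneq I$ the colength strictly drops, so the inner induction hypothesis applied to $\overline{J}$ yields $\eh(I)\le\eh(\m\,\overline{J})\le d!\,\len(R/\overline{J})\le d!\,\len(R/J)$. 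On the other hand, by \eqref{expansion} and Remark~\ref{mod form},
\[
\eh(\m I)=\eh(I)+\sum_{i=1}^{d-1}\binom{d}{i}\eh(I_i)+1,
\]
and Lemma~\ref{colonmult} gives $\len(R/I)=\len(R/J)+\len(R_1/I_1)$; thus the step reduces to the tail estimate
\begin{equation}\label{planeq}
\sum_{i=1}^{d-1}\binom{d}{i}\eh(I_i)+1\ \le\ d!\,\len(R_1/I_1).
\end{equation}

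The tail estimate \eqref{planeq} is where both the dimension induction and the three-dimensional work enter. If $d\ge5$, then $R_1$ is a complete regular local ring of dimension $d-1\ge4$, so (applying the outer induction hypothesis to $\overline{I_1}$, which leaves $\eh(\m_1 I_1)$ unchanged and can only lower the colength) $\eh(\m_1 I_1)\le(d-1)!\,\len(R_1/I_1)$; combining this with $\eh(\m_1 I_1)=\sum_{i=1}^{d-1}\binom{d-1}{i-1}\eh(I_i)+1$ and the bound $\binom{d}{i}=\tfrac{d}{i}\binom{d-1}{i-1}\le d\binom{d-1}{i-1}$ gives \eqref{planeq} with room to spare. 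In the base case $d=4$ one cannot descend in dimension, because the analogue of \eqref{maininequality} is false in dimension three; instead one imports the sharp three-dimensional colength inequality of Corollary~\ref{2.5dim3} (itself a consequence of Theorem~\ref{dim3thm}), applied to $I_1$ in the three-dimensional regular ring $R_1$, namely $\eh(I_1)+3\eh(I_2)+2\eh(I_3)\le6\,\len(R_1/I_1)$. Multiplying by $4$ and rearranging, and using that $\eh(I_2),\eh(I_3)\ge1$ since $I_2,I_3$ are proper ideals in regular local rings of positive dimension, one obtains
\[
4\eh(I_1)+6\eh(I_2)+4\eh(I_3)+1\ \le\ 24\,\len(R_1/I_1)-6\eh(I_2)-4\eh(I_3)+1\ <\ 24\,\len(R_1/I_1),
\]
which is \eqref{planeq} for $d=4$. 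Adding \eqref{planeq} to $\eh(I)\le d!\,\len(R/J)$ and using $\len(R/I)=\len(R/J)+\len(R_1/I_1)$ then closes the induction.

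I expect the delicate point to be exactly the dimension-four base case. For $d\ge5$ the argument is essentially formal once one is in the regular setting, since a hyperplane section only drops the dimension to $d-1\ge4$, where the induction hypothesis is the same clean inequality. But for $d=4$ the hyperplane section lands in dimension three, where $\eh(\m I)\le d!\,\len(R/I)$ genuinely fails, and the only way to recover is to exploit the more refined three-dimensional estimates---the ``strange'' correction terms of Theorem~\ref{dim3thm}, packaged here through Corollary~\ref{2.5dim3}---and to absorb the residual two- and three-dimensional multiplicities $\eh(I_2),\eh(I_3)$ into the colength, as witnessed by the slack $-6\eh(I_2)-4\eh(I_3)$ above. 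Making this absorption (and the attendant bookkeeping in the inner induction) close is the technical heart of the proof; should one also want the sharp form, one would additionally have to track the equality cases through Theorem~\ref{dim3thm} and Rees's theorem on reductions (\cite{Rees1}), exactly as in the proof of Corollary~\ref{2.5dim3}.
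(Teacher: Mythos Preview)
Your proposal is correct and follows essentially the same strategy as the paper: reduce to the complete regular case via Theorem~\ref{reduction}, induct on $d\ge 4$ with an inner induction on colength, and handle the critical base case $d=4$ by importing the three-dimensional estimate of Corollary~\ref{2.5dim3} (equivalently Theorem~\ref{dim3thm} with $c=0$) to absorb the terms $6\eh(I_2)+4\eh(I_3)+1$ into $24\len(R_1/I_1)$.

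The one organizational difference is in the step $d>4$. You expand $\eh(\m I)$ in mixed multiplicities, bound $\eh(I)\le \eh(\m J)\le d!\,\len(R/J)$, and then control the tail via the binomial comparison $\binom{d}{i}\le d\binom{d-1}{i-1}$ together with the dimension hypothesis $\eh(\m_1 I_1)\le (d-1)!\,\len(R_1/I_1)$. The paper instead applies Lemma~\ref{colonmult} directly to the ideal $\m I$: from $\m J\subseteq \m I:x$ one gets $\eh(\m I)\le \eh(\m J)+d\,\eh(\m_1 I_1)$ in one stroke, and then the two inductions close the argument immediately without any expansion or binomial bookkeeping. Both routes are valid; the paper's is a little cleaner, while yours has the incidental virtue of displaying explicit slack (the $-(d-1)$ and the $-6\eh(I_2)-4\eh(I_3)+1$), which makes the strictness of the inequality visible at every step rather than only in the $d=4$ base case.
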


\begin{proof} 
By the expansion formula (\ref{expansion}) we may express $\eh(\m I)$ as a linear combination of the mixed multiplicities of $I$ and $\m$ with binomial coefficients. Therefore, to prove the assertion we may use Theorem \ref{reduction} to assume that $R$ is a regular local ring of dimension $d$ with infinite residue field and $I$ is integrally closed.

We use induction on $d$. We do the base case of $d = 4$ after the inductive step.
Assume that $d>4$. We also use induction on the
length of $R/I$. The base case of this second induction is when $I = \m$. In this case
$\eh(\m^2) = 2^d< d!=d!\len(R/\m)$ whenever $d\geq 4$. Let $x \in \m$ be a general element and set $J=I:x$. Since $\m J \subseteq \m I :x$ we have
$$
\eh(\m I: x) \leq \eh(\m J).
$$
Let $\m_1$ and $I_1$ denote the images of $\m$ and $I$ in  $R_1=R/(x)$.  Then, by Lemma \ref{colonmult} we have
$$
\eh(\m I)\leq  \eh(\m I: x) + d \eh(\m_1 I_1) \leq \eh(\m J) + d \eh(\m_1 I_1)
$$
By induction on dimension we have 
$$\eh(\m_1 I_1) \leq (d-1)! \len( R_1/I_1),$$
so, using Lemma~\ref{colonmult} twice and induction on colength of $I$, we obtain
$$\eh(\m I) \leq \eh(\m J) + d \eh(\m_1 I_1) \leq d! \len( R/J) + d (d-1)! \len( R_1/I_1)=d! \len (R/I).
$$

It remains to handle the case in which $d = 4$. We again use induction on the colength
of $I$, the case in which $I = \m$ is already done. 
Using the expansion formula (\ref{expansion}) for $\eh(\m I)$ in terms of mixed multiplicities and Remark \ref{mod form}, we obtain that
$$\eh(\m I)= \eh(I) + 4\eh(I_1)+6\eh(I_2)+4\eh(I_3) +1.$$
We choose a general $x\in \m$ and set $J = I:x\subseteq \m$. 
Then  $\m J\subseteq I$ by Theorem~\ref{properties}, and
so $\eh(I)\leq \eh(\m J)\leq 24\len(R/J)$, where the last inequality is from our induction
on the colength of $I$. We now apply Theorem \ref{dim3thm}  to the term $\eh(I_1)$. Combining
these, we obtain that
$$\eh(\m I)\leq 24\len(R/J) + 4[6\len(R_1/I_1)-3\eh(I_2)-2\eh(I_3)]+6\eh(I_2)+4\eh(I_3) +1=
24\len(R/I) - 6\eh(I_2)-4\eh(I_3)+1.$$
It follows that $\eh(\m I)\leq 24\len(R/I).$ \end{proof}

\begin{rem}
The proof of Theorem \ref{main} shows that Inequality \ref{maininequality} is indeed strict. Also note that  Inequality \ref{maininequality}  is not true for $d< 4$, for instance it fails for $I=\m$.
\end{rem}

\medskip

\section{Number of generators}

In this brief section we explore
another set of related conjectures originating in a paper of Dao and Smirnov (\cite{Dao-Smirnov}).
In \cite[Theorem~3.1]{Dao-Smirnov} they proved that  for an integrally closed $\m$-primary ideal $I$ in a regular local ring  $(R, \m)$ of dimension $d$ one has a Lech-like bound
\[\eh_1 (\m \mid I) \leq (d-1)! (\mu(I) - d + 1).\] 
In dimension two, this happens to be, in fact, an equality $\mu(I) - 1 = \eh_1 (\m \mid I)$. 
However, the proof of  \cite[Theorem~3.1]{Dao-Smirnov} shows that in dimension at least three the displayed inequality is never an equality. 
The following conjecture proposes a way to strengthen this inequality. Fix positive constants $t_{i, d}$ such that
\[
Q_d(n):=(n + 1) \cdots (n+d-1) = \sum_{i = 1}^{d} t_{d, i} n^{d - i}.
\]
We shall delete the dimension subscript when the dimension $d$ is fixed. Note that $P(n)=nQ(n)$, thus $t_i=s_{i-1}$ for $i=1, \ldots, d$.

\begin{conj}\label{mu conj}
Let $(R, \m)$ be a regular local ring of dimension $d$.
Then  for all $\m$-full (e.g., integrally closed) $\m$-primary ideals $I$,
\[
\sum_{ i = 1}^{d} t_i \eh_i (\m \mid I) \leq (d-1)! \mu(I).
\]
\end{conj}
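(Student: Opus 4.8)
\textbf{Proof proposal for Conjecture~\ref{mu conj}.}
The plan is to mimic the strategy used for Conjectures~\ref{length conj}~and~\ref{mI}: reduce to a regular local ring (here we are already in the regular case, but we still want the residue field to be infinite, which a faithfully flat extension arranges without changing $\mu(I)$, length, or the mixed multiplicities), then induct on $\len(R/I)$ using a general element $x\in\m$. The base case $I=\m$ is an equality, since $\eh_i(\m\mid\m)=\eh(R)=1$ and $\sum_{i=1}^d t_i=Q(1)=d!/1=(d-1)!\,d=(d-1)!\,\mu(\m)$ only when $\mu(\m)=d$, which holds in a regular local ring; so the base case is in fact tight. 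For the inductive step, set $J=I:x=I:\m$ by Theorem~\ref{properties}(1), so that $\m J\subseteq I\subseteq J$, and use Theorem~\ref{properties}(2), namely $\mu(I)=\mu(I_1)+\len(R_1/I_1)$, to pass the right-hand side down to $R_1$ and to $J$.

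The key identity to exploit on the left-hand side is the behavior of mixed multiplicities $\eh_i(\m\mid I)=\eh(I_i)$ under modding out a general element: $\eh_i(\m\mid I)=\eh_{i-1}(\m_1\mid I_1)$ for the relevant range of $i$, by Remark~\ref{mod form}. This lets us rewrite $\sum_{i=1}^{d} t_i\eh_i(\m\mid I)$ in terms of mixed multiplicities of $I_1$ in $R_1$, at the cost of one ``boundary'' term. The coefficients are designed for exactly this: since $P(n)=nQ(n)$ and $t_i=s_{i-1}$, the Stirling recursion relating $P_d$ and $P_{d-1}$ (equivalently $Q_d$ and $Q_{d-1}$) should make the combination telescope, so that $\sum_{i=1}^{d}t_{d,i}\eh_i(\m\mid I)$ is controlled by $\sum_{i=1}^{d-1}t_{d-1,i}\eh_i(\m_1\mid I_1)$ plus lower-order corrections. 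Then the induction hypothesis in dimension $d-1$ applied to $I_1$ gives $\sum_{i=1}^{d-1}t_{d-1,i}\eh_i(\m_1\mid I_1)\le (d-2)!\,\mu(I_1)$, and one multiplies through by an appropriate factor and reconciles with $\eh_1(\m\mid I)\le (d-1)!(\mu(I)-d+1)$ of \cite[Theorem~3.1]{Dao-Smirnov} to absorb the correction terms together with the contribution of $\len(R_1/I_1)$.

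An alternative, perhaps cleaner, route is to combine the already-established Conjecture~\ref{length conj} in low dimensions (Corollaries~\ref{2.5dim2}~and~\ref{2.5dim3}) with Theorem~\ref{properties}(2): in a regular local ring $\eh(R)=1$, so Conjecture~\ref{length conj} reads $\sum_{i=0}^{d-1}s_i\eh_i(\m\mid I)\le d!\,\len(R/I)$; since $s_i=t_{i+1}$, the left side here differs from $\sum_{i=1}^{d}t_i\eh_i(\m\mid I)$ only by reindexing and by the top term $\eh_d(\m\mid I)=\eh(I)$ versus the bottom term $t_1\eh_1(\m\mid I)=s_0\eh_1(\m\mid I)=\eh_1(\m\mid I)$. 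One then needs $d!\,\len(R/I)$ versus $(d-1)!\,\mu(I)$, i.e., $d\,\len(R/I)$ versus $\mu(I)$, which for an $\m$-full ideal follows from iterating Theorem~\ref{properties}(2) down to a DVR. Either way, I expect the main obstacle to be the careful bookkeeping of the Stirling coefficients when passing between dimensions $d$ and $d-1$: one must verify that the ``defect'' produced by the boundary term of the telescoping is no larger than the slack in the Dao--Smirnov bound for $\eh_1(\m\mid I)$, and it is conceivable that the inequality is genuinely delicate in small dimensions and needs the refined two- and three-dimensional estimates of Sections~\ref{Dimen 2}~and~\ref{Dimen 3} rather than just Lech's inequality.
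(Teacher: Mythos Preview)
First, a framing point: Conjecture~\ref{mu conj} is \emph{not} proved in full in the paper. What the paper establishes is the implication ``Conjecture~\ref{length conj} in dimension $d$ $\Rightarrow$ Conjecture~\ref{mu conj} in dimension $d+1$'', and hence Conjecture~\ref{mu conj} only through dimension $4$. Your proposal reads as an attempt at a general proof; no such proof is known, so you should expect any scheme that does not somewhere invoke Conjecture~\ref{length conj} (or something of equivalent strength) in one lower dimension to break.

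Your ``alternative, cleaner'' route contains two concrete errors. The reindexing is not what you say: with $t_i=s_{i-1}$ one has
\[
\sum_{i=1}^{d} t_i\,\eh_i(\m\mid I)=\sum_{j=0}^{d-1} s_j\,\eh_{j+1}(\m\mid I),
\]
which differs from $\sum_{i=0}^{d-1} s_i\,\eh_i(\m\mid I)$ in \emph{every} term, not just a top and a bottom one. More fatally, even ignoring that discrepancy you would need $d!\,\len(R/I)\le (d-1)!\,\mu(I)$, i.e.\ $d\,\len(R/I)\le \mu(I)$; this fails already for $I=\m^2$ in any $d\ge 2$, so the inequality goes the wrong way. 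Iterating Theorem~\ref{properties}(2) gives upper bounds for $\mu(I)$ in terms of lengths, not lower bounds.

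The idea you are circling but never land on is the one the paper uses: do not apply Conjecture~\ref{length conj} in $R$ to $I$, but in $R_1$ (one dimension lower) to the ideal $\m_1 I_1$. The point of Theorem~\ref{properties}(2) is precisely $\len(R_1/\m_1 I_1)=\mu(I)$, so the right-hand side $d!\,\len(R_1/\m_1 I_1)$ becomes $d!\,\mu(I)$ on the nose, with no comparison of $\len(R/I)$ and $\mu(I)$ needed. On the left, one expands each $\eh_i(\m_1\mid \m_1 I_1)=\eh(\m_{i+1}I_{i+1})$ via the expansion formula into mixed multiplicities $\eh_{k+i+1}(\m\mid I)$, and then the polynomial identity $P_d(n+1)=Q_{d+1}(n)$ forces the resulting coefficients to be exactly the $t_{d+1,i}$. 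There is no induction on colength, no use of $J=I:x$, and no appeal to the Dao--Smirnov bound; those ingredients in your first outline are not what drive the argument, and the ``telescoping with a boundary defect to be absorbed by Dao--Smirnov'' step is not made precise and, given that the conjecture is open for $d\ge 5$, cannot be expected to close in general.
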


Note that when $d$ is at least three, $(d-1)!(d-1)< d! -1 = \sum_{i = 2}^{d} t_i$. Therefore, using Conjecture~\ref{mu conj},
$$
 \eh_1 (\m \mid I) + (d-1)!(d-1) \leq \eh_1 (\m \mid I) + \sum_{i = 2}^{d} t_i \eh_i (\m \mid I ) \leq (d-1)! \mu(I).
$$
Hence, Conjecture~\ref{mu conj} is stronger than \cite[Theorem~3.1]{Dao-Smirnov} when the dimension is at least three.

\begin{rem}
In Conjecture~\ref{mu conj} equality holds for powers of the maximal ideal, since by
the linearity of the mixed multiplicities,
$$
\sum_{ i = 1}^{d} t_i \eh_i (\m \mid \m^n) =  \sum_{ i = 1}^{d} t_{i} n^{d-i} = Q(n) =  (d-1)! \binom{n + d - 1}{n} =  (d-1)!\mu (m^n).
$$
\end{rem}

\begin{thm}
Conjecture~\ref{length conj} in dimension $d$ implies Conjecture~\ref{mu conj} in dimension $d + 1$.
In particular, Conjecture~\ref{mu conj} holds in dimension at most $4$.
\end{thm}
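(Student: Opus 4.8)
The plan is to pass to a general hyperplane section and then apply Conjecture~\ref{length conj} in dimension $d$ not to the section $I_1$ itself but to the ideal $\m_1 I_1$, whose colength equals $\mu(I)$. First I would reduce to the case where $(R,\m)$ is a regular local ring of dimension $d+1$ with infinite residue field and $I$ is an $\m$-full $\m$-primary ideal: a faithfully flat base change to a ring with infinite residue field leaves $\mu(I)$, the mixed multiplicities $\eh_i(\m\mid I)$, regularity, and $\m$-fullness unchanged. Then I fix a general $x\in\m$, set $R_1=R/(x)$ (again regular, of dimension $d$, with infinite residue field) and $I_1=IR_1$. Since $I$ is $\m$-full and $x$ is general, Theorem~\ref{properties}(2) gives $\mu(I)=\len(R_1/\m_1 I_1)$.

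The combinatorial heart of the argument is the identity
\[
\sum_{i=1}^{d+1} t_{d+1,i}\,\eh_i(\m\mid I)\;=\;\sum_{j=0}^{d-1} s_{d,j}\,\eh_j(\m_1\mid \m_1 I_1),
\]
which I would deduce from three ingredients. The first is a dimension-drop identity: choosing the general elements compatibly, Remark~\ref{mod form} applied in $R$ and in $R_1$ gives $\eh_i(\m\mid I)=\eh(I_i)=\eh_{i-1}(\m_1\mid I_1)$ for $i=1,\dots,d$, while the top terms agree, $\eh_{d+1}(\m\mid I)=\eh(R)=1=\eh(R_1)=\eh_d(\m_1\mid I_1)$; together with $t_{d+1,i}=s_{d+1,i-1}$ this rewrites the left-hand side as $\sum_{j=0}^{d} s_{d+1,j}\,\eh_j(\m_1\mid I_1)$. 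The second is a product formula, obtained by computing $\eh(\m_1^{s+t} I_1^{t})$ in two ways through the expansion formula (\ref{expansion}) --- once as $\eh((\m_1)^{s+t}(I_1)^t)$ and once as $\eh((\m_1)^s(\m_1 I_1)^t)$ --- namely $\eh_j(\m_1\mid \m_1 I_1)=\sum_{l\ge0}\binom{d-j}{l}\,\eh_{j+l}(\m_1\mid I_1)$. The third is the Stirling-number identity $\sum_j s_{d,j}\binom{d-j}{m-j}=s_{d+1,m}$, which is nothing but the coefficient comparison in $P_{d+1}(n)=n\,P_d(n+1)$. Substituting the product formula into the right-hand side of the displayed identity and simplifying with the Stirling identity turns it into $\sum_{j=0}^{d} s_{d+1,j}\,\eh_j(\m_1\mid I_1)$, which matches the rewritten left-hand side.

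With the identity established, I would conclude by applying Conjecture~\ref{length conj} in dimension $d$ to the $\m_1$-primary ideal $\m_1 I_1$ in the $d$-dimensional regular local ring $R_1$:
\[
\sum_{j=0}^{d-1} s_{d,j}\,\eh_j(\m_1\mid \m_1 I_1)\;\le\; d!\,\len(R_1/\m_1 I_1)\,\eh(R_1)\;=\;d!\,\mu(I),
\]
using $\eh(R_1)=1$ and $\mu(I)=\len(R_1/\m_1 I_1)$. Chained with the identity, this yields $\sum_{i=1}^{d+1} t_{d+1,i}\,\eh_i(\m\mid I)\le d!\,\mu(I)$, which is Conjecture~\ref{mu conj} in dimension $d+1$. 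For the final assertion, Conjecture~\ref{length conj} is already known in all dimensions at most $3$ --- it is Lech's inequality in dimension one, and Corollaries~\ref{2.5dim2} and~\ref{2.5dim3} in dimensions two and three --- so the implication yields Conjecture~\ref{mu conj} in dimensions two, three, and four; dimension one is immediate, since in a discrete valuation ring both sides of Conjecture~\ref{mu conj} equal $\mu(I)=1$.

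The step I expect to demand the most care is the first one: one must arrange the general element cutting $R$ down to $R_1$ and the subsequent general elements used to compute mixed multiplicities inside $R_1$ so that $\eh_i(\m\mid I)=\eh_{i-1}(\m_1\mid I_1)$ genuinely holds, and then keep careful track of the boundary terms (the index $j=d$ on the right and $i=d+1$ on the left) so that the two sides of the central identity coincide term by term rather than merely up to an inequality. A secondary, minor point is that Theorem~\ref{properties}(2) is invoked here for $\m$-full rather than only integrally closed ideals; the equality $\mu(I)=\len(R_1/\m_1 I_1)$ used above is, however, precisely the reformulation of $\m$-fullness with respect to a general element, so this causes no real difficulty.
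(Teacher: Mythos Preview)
Your proposal is correct and follows essentially the same route as the paper: pass to $R_1=R/(x)$ for general $x$, use $\mu(I)=\len(R_1/\m_1 I_1)$, apply Conjecture~\ref{length conj} in dimension $d$ to $\m_1 I_1$, and match the two linear combinations of mixed multiplicities. The only cosmetic difference is in how the coefficient identity is verified: you expand the product formula for $\eh_j(\m_1\mid \m_1 I_1)$ and invoke the Stirling recursion coming from $P_{d+1}(n)=n\,P_d(n+1)$ directly, whereas the paper observes that both sides are linear in the $\eh_i(\m\mid I)$ and identifies the coefficients by specializing to $I=\m^n$, where each side becomes the polynomial $Q_{d+1}(n)=P_d(n+1)$. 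Your concern about the boundary term $i=d+1$ and about Theorem~\ref{properties}(2) for $\m$-full (rather than merely integrally closed) ideals is well placed but, as you note, causes no trouble.
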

\begin{proof}
Let $(R, \m)$ be a regular local ring of dimension $d + 1$ and $I$  an integrally closed $\m$-primary ideal. Let $R_1=R/(x)$ for a general element $x \in \m$. Then $R_1$ is a regular ring of dimension $d$,  and by Conjecture~\ref{length conj} in dimension $d$ and Theorem~\ref{properties}  we have
\[
\sum_{i = 0}^{d-1} s_{i} \eh_i (\m_1 \mid \m_1I_1 ) \leq d! \len (R_1/\m_1 I_1) = d! \mu(I),
\]
where $s_i$'s are the coefficients of $P_d(n)$. Note that  by  Remark~\ref{mod form} and the expansion formula (\ref{expansion}),
\[
\eh_i (\m_1 \mid \m_1I_1) = \eh(\m_{i + 1} I_{i + 1})= 
\sum_{k = 0}^{d -i} \binom {d-i}{k} \eh_{k} (\m_{i+1} \mid I_{i+1})=
\sum_{k = 0}^{d - i } \binom {d - i }{k} \eh_{k+i +1} (\m \mid I).
\]
Thus, there are coefficients $c_{i}$ such that $\sum_{i = 0}^{d-1} s_{i} \eh_i (\m_1 \mid \m_1I_1 ) = \sum_{i = 1}^{d+1} c_{i} \eh_i (\m \mid I)$
for all $I$. It remains to see that $c_{i}$'s are indeed the coefficients of $Q_{d+1}(n)$. To this end, one may compare  the polynomials arising on both sides when setting  $I= \m^n$. In this case, 
$$
\sum_{i = 0}^{d-1} s_{i} \eh_i (\m_1 \mid \m_1I_1 ) = \sum_{i = 0}^{d-1} s_{i} \eh_i (\m_1 \mid \m_1^{n+1} ) =
 \sum_{i = 0}^{d-1} s_{i} (n+1)^{d-i} = P_d(n+1)=Q_{d+1}(n),
$$
and
$$
\sum_{i = 1}^{d+1} c_{i} \eh_i (\m \mid I) = \sum_{i = 1}^{d+1} c_{i} \eh_i (\m \mid \m^n) =  \sum_{i = 1}^{d+1} c_{i} n^{d-i+1}.
$$
Therefore,  
$Q_{d+1}(n) = \sum_{i = 1}^{d+1} c_{i} n^{d-i+1}$
and the claim follows.
\end{proof}

A similar argument allows us to derive corollaries of Theorem~\ref{main}.

\begin{prop}\label{7.4}
Let $(R, \m)$ be a regular local ring of dimension $d\geq 5$.
Then for every $\m$-full (e.g., integrally closed) ideal $I$,
\[
 \sum_{i = 1}^d 2^{i-1} \binom{d - 1}{i - 1} \eh_i (\m \mid I) \leq (d-1)! \mu(I).
\]

\end{prop}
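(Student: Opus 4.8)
The plan is to mirror the argument just used to derive Conjecture~\ref{mu conj} from Conjecture~\ref{length conj}, but feeding in Theorem~\ref{main} in place of Conjecture~\ref{length conj}. First I would enlarge the residue field to be infinite by a faithfully flat extension, which changes neither $\mu(I)$, nor the mixed multiplicities $\eh_i(\m\mid I)$, nor the hypothesis on $I$ (integral closure commutes with such an extension, and $\m$-fullness is likewise preserved). Then I would choose a general element $x\in\m$ and set $R_1=R/(x)$; since $R$ is regular, $x$ is part of a regular system of parameters, so $R_1$ is a regular local ring of dimension $d-1\geq 4$ because $d\geq 5$. Since $I$ is $\m$-full, Theorem~\ref{properties}(2) gives $\mu(I)=\len(R_1/\m_1 I_1)$, and since $I_1$ is $\m_1$-primary and $\dim R_1\geq 4$, Theorem~\ref{main} applies to the ideal $\m_1 I_1$ in $R_1$ and yields
\[
\eh(\m_1\cdot \m_1 I_1)\ \leq\ (d-1)!\,\len(R_1/\m_1 I_1)\,\eh(R_1)\ =\ (d-1)!\,\mu(I),
\]
where we used $\eh(R_1)=1$.

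The second step is purely bookkeeping: expand $\eh(\m_1^2 I_1)$ in terms of the mixed multiplicities of $\m$ and $I$ in $R$. Applying the expansion formula (\ref{expansion}) in $R_1$ (of dimension $d-1$) to $\m_1^s I_1^t$ at $(s,t)=(2,1)$ gives
\[
\eh(\m_1^2 I_1)\ =\ \sum_{i=0}^{d-1}\binom{d-1}{i}2^i\,\eh_i(\m_1\mid I_1).
\]
Next I would identify $\eh_i(\m_1\mid I_1)=\eh_{i+1}(\m\mid I)$ for every $i$: for $i\leq d-2$ both sides equal $\eh(I_{i+1})$ by Remark~\ref{mod form} (modding $\m$ out by $x$ and then by $i$ further general elements produces the ring $R_{i+1}$), while for $i=d-1$ both sides equal $\eh(R)=1$ since $R$ is regular. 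Substituting these equalities into the display above and reindexing by $j=i+1$ turns the bound of the first step into
\[
\sum_{i=1}^{d}2^{i-1}\binom{d-1}{i-1}\eh_i(\m\mid I)\ =\ \eh(\m_1^2 I_1)\ \leq\ (d-1)!\,\mu(I),
\]
which is exactly the assertion.

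I do not anticipate a real obstacle: the proposition is essentially a formal consequence of Theorem~\ref{main} once the mixed-multiplicity bookkeeping is set up. The only place demanding care is the identity $\eh_i(\m_1\mid I_1)=\eh_{i+1}(\m\mid I)$ together with its boundary case $i=d-1$; but this is exactly the kind of computation already carried out in the proof of the preceding theorem (where $\eh_i(\m_1\mid\m_1 I_1)$ was expanded into a sum of the $\eh_{k+i+1}(\m\mid I)$), so I would adapt that computation essentially verbatim. A minor secondary point is checking that the reduction to an infinite residue field preserves the hypothesis on $I$; since integral closure commutes with the faithfully flat extension $R\to R[t]_{\m R[t]}$, one may, if preferred, simply assume at the outset that $I$ is integrally closed and carry the reduction through in that form.
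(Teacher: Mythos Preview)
Your proposal is correct and follows essentially the same route as the paper: apply Theorem~\ref{main} in $R_1$ to the ideal $\m_1 I_1$, use Theorem~\ref{properties}(2) to rewrite $\len(R_1/\m_1 I_1)$ as $\mu(I)$, expand $\eh(\m_1^2 I_1)$ via the expansion formula, and identify $\eh_i(\m_1\mid I_1)=\eh_{i+1}(\m\mid I)$. The paper's proof is the same argument, only more tersely stated (it writes $\eh_i(\m_1^2\mid I_1)=2^i\eh_i(\m_1\mid I_1)$ rather than invoking the expansion formula at $(s,t)=(2,1)$, and it does not spell out the residue-field extension or the boundary case $i=d-1$).
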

\begin{proof}
By Theorem~\ref{properties} and Theorem~\ref{main} we have
$$
\sum_{i = 0}^{d - 1} \binom{d - 1}{i} \eh_{i} (\m_1^2 \mid I_1) =  \eh(\m_1^2I_1) 
\leq (d - 1)!\len (R_1/\m_1 I_1) = (d - 1)!\mu (I).
$$
However, $\eh_{i} (\m_1^2 \mid I_1) = 2^i \eh_i(\m_1 \mid I_1) = 2^i \eh_{i + 1} (\m \mid I)$
and the claim follows.
\end{proof}

\begin{rem}
Under the assumptions of Theorem \ref{7.4}, following the proof of \cite[Theorem~3.1]{Dao-Smirnov}, we may similarly derive that
\[
\sum_{i = 0}^{d - 1} \binom{d-1}{i} \eh_{i} (\m_1 \mid I_1)
=  \eh (\m_1 I_1) \leq (d-1)!\len (R_1/I_1) \leq 
(d-1)! (\mu (I) - d + 1).
\]
Thus
\[
\sum_{i = 1}^{d} \binom{d-1}{i - 1} \eh_{i} (\m \mid I) \leq (d-1)! (\mu (I) - d + 1).
\]
However, this inequality is often weaker than Proposition~\ref{7.4}, as the coefficients at $\eh_i (\m \mid I)$ are smaller.
\end{rem}

\subsection{An example}
We end this section with an example that illustrates our results.

\begin{prop}\label{EX}
Let $R= \textsf{k}[[x,y,z]]$ and 
consider a family of monomial ideals $I = \overline{(x^a, y^b, z^c, xyz)}$ such that $3 \leq a \leq b \leq c$ and  $1/a + 1/b+1/c \leq 1$.
Then
\begin{enumerate}
\item $\mu (I) = 2a + b + 1$,

\item $\eh(I) = ab + bc + ac$,
\item $\eh_1 (\m \mid I) = 2a + b$,
\item $\eh_2(\m \mid I) = 3$,
\item $\len (R/I) = f(a, b) + f(b,c) + f(a, c) - a - b - c + 1$, where 
\[
f(a, b) = 
\begin{cases}
\lceil \frac{ab + b + a}{2}\rceil - 1, & \text{if $a$ does not divide $b$}\\
\lceil \frac{ab + b}{2}\rceil, & \text {if $a$ divides $b$}
\end{cases}.
\]
\end{enumerate}
\end{prop}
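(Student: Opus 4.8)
The plan is to compute each of the five quantities for the monomial ideal $I=\overline{(x^a,y^b,z^c,xyz)}$ by working with its Newton polyhedron, since the integral closure of a monomial ideal is the monomial ideal whose exponent set consists of the lattice points in the convex hull $\Gamma$ of the exponents of its generators together with the positive orthant shifted to them. Here $\Gamma$ is the unbounded polyhedron cut out by $x\ge 0$, $y\ge 0$, $z\ge 0$ and the planes through the three bounded facets; the hypothesis $1/a+1/b+1/c\le 1$ is exactly what guarantees that the monomial $xyz$ lies in $\Gamma$, so that all four listed generators are genuinely needed and $\Gamma$ has the combinatorial type of a ``truncated corner'' with the three coordinate facets $x=a$-type planes, the plane $x/a+y/b+z/c=1$ through $(a,0,0),(0,b,0),(0,0,c)$, and an extra facet through $(a,0,0)$, $(0,b,0)$, $(0,0,c)$ cut near the point $(1,1,1)$ coming from $xyz$. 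I would first pin down the facet structure of $\Gamma$ precisely (including verifying that under $a\le b\le c$ the vertex adjacent to $xyz$ is as expected), because every subsequent count is read off from it.

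For $\mu(I)$ in part (1): the minimal monomial generators of a monomial ideal are the lattice points of $\Gamma$ that are vertices of the staircase, i.e. the points $p\in\Gamma$ such that $p-e_i\notin\Gamma$ for each $i$ with $p_i>0$. I would enumerate these directly on the new facet introduced by $xyz$; the count $2a+b+1$ should come out as the three ``pure power'' corners $x^a,y^b,z^c$, plus the generator $xyz$, plus a chain of mixed monomials along the edges meeting $(1,1,1)$, and the asymmetry $2a+b$ rather than $a+b+c$ reflects that it is the shortest edges (governed by the smallest exponents $a$ and $b$) that contribute extra generators. For the multiplicity in part (2), $\eh(I)=d!\,\mathrm{vol}(\RR^3_{\ge0}\setminus\Gamma)$ (normalized volume of the region under the Newton polyhedron); removing the simplex cut by $x/a+y/b+z/c=1$ gives the main term, and the small correction coming from the $xyz$ facet near $(1,1,1)$ washes out, leaving exactly $ab+bc+ca$. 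Parts (3) and (4) use Remark~\ref{mod form}: $\eh_1(\m\mid I)=\eh(I_1)=\ord(I_1)$ where $I_1$ is the image in $R_1=R/(x)$ for a general linear $x$, and $\eh_2(\m\mid I)=\ord(I_2)=3$ since $I_2$ is the image in a DVR and the point $(1,1,1)$ has coordinate sum $3$; for $\eh(I_1)$ one intersects $\Gamma$ with a generic plane and reads off the order, which for these ideals is the minimum total degree of a generator along a general hyperplane section, namely $\min(a,3)+\cdots$ assembling to $2a+b$ (again the small exponents dominate because of $a\le b\le c$).

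The main obstacle is part (5), the exact colength formula, where the case split on divisibility appears. The cleanest route is to compute $\len(R/I)$ by inclusion–exclusion over the three ``two-variable'' faces: $\len(R/I)$ counts lattice points of $\RR^3_{\ge0}$ strictly below $\Gamma$, and projecting to each coordinate plane one reduces to counting lattice points under a line segment from $(a,0)$ to $(0,b)$ (the two-variable Newton polygon of $\overline{(x^a,y^b,xy)}$), which is a classical Pick/Popoviciu count equal to $f(a,b)$ — and whether $a\mid b$ changes whether the hypotenuse passes through extra lattice points, producing the ceiling-versus-no-ceiling dichotomy. One then adds the three planar contributions $f(a,b)+f(b,c)+f(a,c)$, subtracts the triple-overcount along the coordinate axes (the $-a-b-c$), and adds back the origin ($+1$); I would carefully check that the three-dimensional body contributes nothing beyond these two-dimensional slices, which holds precisely because the only bounded ``interior'' lattice point region collapses onto the axes once $xyz\in\Gamma$. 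Verifying the edge cases where one or more of $a\mid b$, $b\mid c$, $a\mid c$ hold simultaneously, and checking consistency of all five formulas against the hypotheses via a Lech-type sanity check (e.g. confirming $\eh(I)+3\eh_1(\m\mid I)+2\eh_2(\m\mid I)\le 6\len(R/I)$ from Corollary~\ref{2.5dim3}), is the last step.
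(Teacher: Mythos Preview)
Your Newton-polyhedron strategy is reasonable in outline, but two of the five computations contain genuine errors, and the paper's proof proceeds quite differently.

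\textbf{Part (2).} You write that the volume under the simplex $x/a+y/b+z/c=1$ gives the main term and that ``the small correction coming from the $xyz$ facet near $(1,1,1)$ washes out, leaving exactly $ab+bc+ca$.'' This is not what happens: the simplex alone gives normalized volume $abc$, and when $1/a+1/b+1/c<1$ the point $(1,1,1)$ lies strictly below the simplex face, so the correction is $abc-(ab+bc+ca)$, which is far from negligible (e.g.\ $36$ versus $33$ when $(a,b,c)=(3,3,4)$). You have not computed this correction, and the claim that it ``washes out'' is false as stated. The paper avoids the volume computation entirely: it observes that $xyz\in I$ is a superficial element, so $\eh(I)=\eh(I\,R/(xyz))$, and then additivity over the components of $R/(xyz)$ gives $\eh(I)=bc+ac+ab$ in one line.

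\textbf{Part (3).} You write $\eh_1(\m\mid I)=\eh(I_1)=\ord(I_1)$ and propose to ``read off the order'' after cutting by a generic hyperplane. But $R_1$ has dimension two, so $\eh(I_1)$ is a genuine multiplicity, not an order; indeed $\ord(I_1)=3$ here (from the image of $xyz$), whereas $\eh(I_1)=2a+b\ge 9$. The paper computes $\eh(I_1)$ by the same superficial-element trick: after passing to $R_1=\sk[[x,y]]$ one has $I_1=(x^a,y^b,(x+y)^c,xy(x+y))$ up to change of variables, the element $xy(x+y)$ is superficial, and additivity over the three components of $R_1/(xy(x+y))$ gives $2a+b$.

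\textbf{Parts (1) and (5).} Your inclusion--exclusion idea for the colength is essentially what the paper does, based on the key observation
\[
I=\overline{(x^a,y^b)}+\overline{(y^b,z^c)}+\overline{(x^a,z^c)}+(xyz),
\]
which you should state and use explicitly (note: the planar pieces are $\overline{(x^a,y^b)}$, not $\overline{(x^a,y^b,xy)}$ as you wrote). This decomposition also gives $\mu(I)$ immediately: each $\overline{(x^a,y^b)}$ has $\min(a,b)+1$ generators since it is integrally closed in a two-dimensional regular local ring, so summing and correcting for the double-counted $x^a,y^b,z^c$ and adding $xyz$ gives $2a+b+1$. Your ``staircase vertex'' count would eventually reach the same number but is much harder to carry out rigorously.

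In short, the decisive idea you are missing is the repeated use of \emph{superficial elements} ($xyz$ in $R$, then $xy(x+y)$ in $R_1$) together with additivity of multiplicity, which replaces all the delicate polyhedral volume computations by one-line reductions to dimension one.
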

\begin{proof}
As a first step, we observe that
\[
I = \overline {(x^a, y^b)} + \overline {(y^b, z^c)} + \overline {(x^a, z^c)} + (xyz).
\]
We can easily compute the number of generators using this observation.
Since $\mu(\overline {(x^a, y^b)})$ can be computed in $\textsf{k}[x,y]$ using the order, 
it has $a + 1$ generators. 
Thus $\mu(I)$ is obtained by adding these generators up and noting that $x^a, y^b, z^c$ were counted twice. 

In order to compute the colength, we first note that if $a \leq b$, then  
\[
\len (\textsf{k}[x,y]/\overline {(x^a, y^b)}) = f(a,b).
\]
This can be seen by induction, using that a basis of  $\textsf{k}[x,y]/\overline{(x^a, y^b)}$ is given by monomials $x^iy^j$ such that
$0 \leq i < a$ and $0 \leq j < b - \lfloor i \frac ba \rfloor$.
Then we can compute $\len (R/I)$ by the inclusion-exclusion formula:
\[
\len (R/I) = \len (\textsf{k}[x,y]/\overline {(x^a, y^b)}) + 
\len (\textsf{k}[x,z]/\overline {(x^a, z^c)}) + \len (\textsf{k}[y,z]/\overline {(y^b, z^c)})
- a - b - c +1,
\]
where $a,b,c$ represent the number of points on each of the axes respectively. 

To compute $\eh(I)$,  $\eh_1(\m \mid I)$ and $\eh_2(\m \mid I)$, we may assume $I=(x^a, y^b, z^c, xyz)$, since 
these multiplicities are invariant up to integral closure.

We compute the multiplicity $\eh(I)$ by using that $xyz \in I$ is a superficial element. Thus by the additivity property
\[
\eh(I) = \eh(I R/(xyz)) = \eh(I R/(x)) + \eh(I R/(y)) + \eh(I R/(z)) = bc + ac + ab.
\]

For computing $\eh_1  (\m \mid I)=\eh(I_1)$, we take a general element of the form $\alpha=\lambda_1 x + \lambda_2y-z \in \m$. 
After a possible change of variables, we may write $I_1 = (x^a, y^b, (x+y)^c, xy(x + y))R_1$, where $R_1 = \textsf{k}[[x,y]]$. 
Since $\alpha \in \m$ is general, the element $xy(x + y) \in I_1$ is still superficial, so by the additivity property
\[
\eh(I_1)=\eh \left( (x^a, y^b, (x+y)^c) \left( \textsf{k}[x,y]/(xy(x+y)) \right) \right)
= \eh ((y^b)\textsf{k}[y]) + 2\eh((x^a)\textsf{k}[x]) = 2a + b.
\]
Finally, note that $\eh_2(\m \mid I) =  \ord(I)=3$.
\end{proof}

\begin{ex}
Let $I$ be the ideal described in Proposition \ref{EX}.
Then Conjecture~\ref{mu conj} asserts that 
$$2 \mu(I) \geq \eh_1 (\m \mid I) + 3 \eh_2 (\m \mid I) + 2 \eh_3 (\m \mid I),$$ 
which becomes
\[
4a + 2b +2 \geq 2a+ b + 9 + 2 = 2a + b + 11.
\]
Thus we have equality if and only if $a = b = 3$. In this case $I_1 = (x, y)^3$.\\

For Conjecture~\ref{length conj} we need that 
\[
6 \len (R/I) \geq \eh(I) + 3 \eh(I_1) + 2 \eh(I_2) = ab + bc + ac + 6a +3b + 6.
\]
However, 
\begin{align*}
6 \len (R/I) &\geq
6 \left  (
\left \lceil \frac{ab + b}{2}\right\rceil + 
\left \lceil \frac{ac + c}{2}\right\rceil + \left \lceil \frac{bc + c}{2} \right\rceil - a - b - c + 1
\right )\\
&\geq 3 (ab + bc + ac)  - 3a - 3b - 3c + 6,
\end{align*}
so it remains to show that
\[
2ab + 2bc + 2ac \geq 9a + 6b + 3c.
\]
This can be seen by setting $b = a + x$ and $c = a + y$. Then on the left side we have
\[
ab + bc + ac  = 3a^2 + 2ax + 2ay + xy,
\]
and the inequality becomes
\[
6a^2 + 4ax + 4ay + 2xy \geq 18 a + 6x + 3y.
\]
This inequality is always true since $a \geq 3$ by  assumption, so $6a^2 \geq 18a$, $4ax \geq 6x$, and $4ay \geq 3y$.
We also note that equality holds if and only if $a = b = c = 3$, that is $I = \m^3$.
\end{ex}

\bigskip


\end{document}